\newtheorem{thm}{Theorem\hskip 5mm}[section]
\newtheorem{theorem}{Theorem\hskip 5mm}[section]
\newtheorem{prop}[thm]{Proposition\hskip 5mm}
\newtheorem{cor}[thm]{Corollary\hskip 5mm}
\newtheorem{lem}[thm]{Lemma\hskip 5mm}
\newtheorem{lemma}[thm]{Lemma\hskip 5mm}
\newtheorem{exa}[thm]{Example\hskip 5mm}
\newtheorem{note}[thm]{Note\hskip 5mm}
\def\r{{\mathfrak r}}
\def\Sp{{\mathrm {Sp}}}
\def\al{{\alpha}}
\def\be{{\beta}}
\def\si{{\sigma}}
\def\GL{{\mathrm{ GL}}}
\def\m{{\mathfrak m}}
\def\i{{\mathfrak i}}
\def\j{{\mathfrak j}}
\def\r{{\mathfrak r}}
\def\b{{\mathfrak b}}
\begin{document}

\title[Unitary groups and ramified extensions]{Unitary groups and ramified extensions}

\author{J. Cruickshank}
\address{School of Mathematics, Statistics and Applied Mathematics , National University of Ireland, Galway, Ireland}
\email{james.cruickshank@nuigalway.ie}

\author{F. Szechtman}
\address{Department of Mathematics and Statistics, Univeristy of Regina, Canada}
\email{fernando.szechtman@gmail.com}
\thanks{The second author was supported in part by an NSERC discovery grant}

%    General info
\subjclass[2010]{15A21, 15A63, 11E39, 11E57, 20G25}

\keywords{unitary group; skew-hermitian form; local ring}

\begin{abstract} We classify all non-degenerate skew-hermitian forms defined over certain local rings, not necessarily commutative,
and study some of the fundamental properties of the associated unitary groups, including their orders when the ring in question is finite.
\end{abstract}

\maketitle

\section{Introduction}

More than half a century ago \cite{D} offered a systematic study
of unitary groups, as well as other classical groups, over fields
and division rings. Thirty five years later, \cite{HO} expanded
this study to fairly general classes of rings. In particular, the
normal structure, congruence subgroup, and generation problems for
unitary (as well as other classical) groups are addressed in
\cite{HO} in great generality. In contrast, the problem of
determining the order of a unitary group appears in \cite{HO} only
in the classical case of finite fields, as found in \cite[\S
6.2]{HO}.

General formulae for the orders of unitary groups defined over a
finite ring where 2 is unit were given later in \cite{FH}. The proofs in \cite{FH} are
fairly incomplete and, in fact, the formulae in \cite[Theorem 3]{FH} are incorrect when the involutions induced on the given
residue fields are not the identity (even the order of the classical unitary groups defined over finite fields is wrong in this case).
The argument in \cite[Theorem 3]{FH} is primarily based on a reduction homomorphism, stated without
proof to be surjective.

Recently, the correct orders of unitary groups defined over a finite local
ring where 2 is invertible were given in \cite{CHQS}, including
complete proofs. It should be noted that the forms underlying
these groups were taken to be hermitian, which ensured the
existence of an orthogonal basis. Moreover, the unitary groups
from \cite{CHQS} were all extensions of orthogonal or unitary
groups defined over finite fields.

In the present paper we study the unitary group $U_n(A)$
associated to a non-degenerate skew-hermitian form $h:V\times V\to
A$ defined on a free right $A$-module $V$ of finite rank $n$,
where $A$ is a local ring, not necessarily commutative, endowed
with an involution $*$ that satisfies $a-a^*\in\r$ for all $a\in
A$, and $\r$ is the Jacobson radical of $A$. It is also assumed
that $\r$ is nilpotent and $2\in U(A)$, the unit group of $A$.
These conditions occur often, most commonly when dealing with
ramified quadratic extensions of quotients of local principal
ideal domains with residue field of characteristic not 2 (see
Example \ref{tresdos} for more details). A distinguishing feature
of this case, as opposed to that of \cite{CHQS}, is that $h(v,v)$
is a non-unit for every $v\in V$. In particular, $V$ lacks an
orthogonal basis. As the existence of an orthogonal basis is the
building block of the theory developed in \cite{CHQS}, virtually
all arguments from \cite{CHQS} become invalid under the present
circumstances. Moreover, it turns out that now $n=2m$ must be even
and, when $A$ is finite, $U_{2m}(A)$ is an extension of the
\emph{symplectic} group $\Sp_{2m}(q)$ defined over the residue
field $F_q=A/\r$. In view of the essential differences between the
present case and that of \cite{CHQS}, we hereby develop, from the
beginning, the tools required to compute $|U_{2m}(A)|$ when $A$ is
finite and the above hypotheses apply. In particular, a detailed
and simple proof that the reduction homomorphism is surjective is
given.

The paper is essentially self-contained and its contents are as
follows. It is shown in \S\ref{s1} that $n=2m$ must be even, and
that $V$ admits a basis relative to which the Gram matrix of $h$
is equal to
$$
J=\left(
    \begin{array}{cc}
      0 & 1 \\
      -1 & 0 \\
    \end{array}
  \right),
$$
where all blocks have size $m\times m$. Thus, $U_{2m}(A)$ consists of all $X\in \GL_{2m}(A)$ such that
$$
X^*JX=J,
$$
where $(X^*)_{ij}=(X_{ji})^*$. We prove in \S\ref{s2} that
$U_{2m}(A)$ acts transitively on basis vectors of $V$ having the
same length. An important tool is found in \S\ref{s3}, namely the
fact that the canonical reduction map $U_{2m}(A)\to U_{2m}(A/\i)$
is surjective, where $\i$ is a $*$-invariant ideal of $A$ (the
proof of the corresponding result from \cite{CHQS} makes extensive
use of the fact that an orthogonal basis exists). The surjectivity
of the reduction map allows us to compute, in \S\ref{s4}, the
order of $U_{2m}(A)$ when $A$ is finite, by means of a series of
reductions (a like method was used in \cite{CHQS}). We find that
\begin{equation}\label{or1}
|U_{2m}(A)|=|\r|^{2m^2-m}|\m|^{2m} |\Sp_{2m}(q)|,
\end{equation}
where $\m=R\cap\r$ and $R$ is the additive group of all $a\in A$
such that $a^*=a$. We also obtain in~\S\ref{s4} the order of the
kernel, say $U_{2m}(\i)$, of the reduction map $U_{2m}(A)\to
U_{2m}(A/\i)$. Here $U_{2m}(\i)$ consists of all $1+X\in
U_{2m}(A)$ such that $X\in M_{2m}(\i)$. When $\i\neq A$, we obtain
\begin{equation}\label{or3}
U_{2m}(\i)=|\i|^{2m^2-m}|\i\cap\m|^{2m}.
\end{equation}
A totally independent way of computing $|U_{2m}(A)|$ is offered in
\S\ref{s5}, where we show that
\begin{equation}\label{or2}
|U_{2m}(A)|=\frac{|\r|^{m(m+1)}|A|^{m^2} (q^{2m}-1)(q^{2(m-1)}-1)\cdots (q^2-1)}{|S|^{2m}}.
\end{equation}
Here $S$ stands for the additive group of all $a\in A$ such that
$a^*=-a$. It should be noted that (\ref{or1})-(\ref{or2}) are
valid even when $A$ is neither commutative nor principal. We also
prove in \S\ref{s5} that the number of basis vectors of $V$ of any
given length is independent of this length and equal~to
$$
(|A|^{2m}-|\r|^{2m})/|S|.
$$
In this regard, in \S\ref{s6} we demonstrate that the order of the
stabilizer, say $S_v$, in $U_{2m}(A)$ of a basis vector $v$ is
independent of $v$ and its length, obtaining
$$
|S_v|=|U_{2(m-1)}(A)|\times |A|^{2m-1}/|S|.
$$
We end the paper in \S\ref{s7}, where a refined version of  (\ref{or1}) and (\ref{or2}) is given when $A$ is commutative and principal.

Virtually all the above material will find application in a forthcoming paper on the Weil representation of $U_{2m}(A)$.

\section{Non-degenerate skew-hermitian forms}\label{s1}

Let $A$ be a ring with $1\neq 0$. The Jacobson radical of $A$ will be denoted by $\r$ and the unit group of $A$ by $U(A)$.
We assume that $A$ is endowed with an involution $*$, which we interpret to mean an antiautomorphism of order $\leq 2$. Note that if $*=1_A$
then $A$ is commutative. Observe also that $\r$ as well as all of its powers are $*$-invariant ideals of $A$.

We fix a right $A$-module $V$ and we view its dual $V^*$ as a right $A$-module via
$$
(\alpha a)(v)=a^* \alpha(v),\quad v\in V,a\in A,\alpha\in V^*.
$$
We also fix a skew-hermitian form on $V$, that is, a function $h:V\times V\to A$ that is linear in the second variable
and satisfies
$$
h(v,u)=-h(u,v)^*,\quad u,v\in V.
$$
In particular, we have
$$
h(u+v,w)=h(u,w)+h(v,w)\text{ and }h(ua,v)=a^*h(u,v),\quad u,v,w\in V,a\in A.
$$
Associated to $h$ we have an $A$-linear map $T_h:V\to V^*$, given by
$$
T_h(v)=h(v,-),\quad v\in V.
$$
We assume that $h$ is non-degenerate, in the sense that $T_h$ is an isomorphism.

\begin{lemma}\label{deco} Suppose $V=U\perp W$, where $U,W$ are submodules of $V$. Then the restriction $h_U$ of $h$ to $U\times U$ is non-degenerate.
\end{lemma}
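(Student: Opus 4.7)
The plan is to establish that the associated map $T_{h_U}:U\to U^*$, sending $u$ to $h(u,-)|_U$, is an $A$-module isomorphism. Recall that $V=U\perp W$ means $V=U\oplus W$ as right $A$-modules with $h(U,W)=0$.

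For injectivity, suppose $u\in U$ satisfies $h(u,u')=0$ for every $u'\in U$. Because $U\perp W$ one also has $h(u,w)=0$ for every $w\in W$, and since $V=U+W$ this forces $h(u,v)=0$ for every $v\in V$, i.e.\ $T_h(u)=0$. The hypothesis that $T_h$ is an isomorphism then gives $u=0$.

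For surjectivity, the key idea is to use the decomposition $V=U\oplus W$ to lift functionals. Given $\alpha\in U^*$, define $\tilde\alpha:V\to A$ by $\tilde\alpha(u+w)=\alpha(u)$ for $u\in U$, $w\in W$. The projection $V\to U$ along $W$ is right $A$-linear (since $U$ and $W$ are submodules), so $\tilde\alpha\in V^*$. By surjectivity of $T_h$ there exists $v_0\in V$ with $T_h(v_0)=\tilde\alpha$; write $v_0=u_0+w_0$ with $u_0\in U$, $w_0\in W$. For any $u'\in U$, the orthogonality $U\perp W$ yields $h(v_0,u')=h(u_0,u')+h(w_0,u')=h(u_0,u')$, while simultaneously $h(v_0,u')=\tilde\alpha(u')=\alpha(u')$. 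Hence $T_{h_U}(u_0)=\alpha$.

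There is no real obstacle here; the only point that requires a moment's attention is checking that $\tilde\alpha$ actually lies in $V^*$ with its prescribed right $A$-module structure, which amounts to the elementary observation that projection onto a direct summand is a module homomorphism. Everything else is a direct consequence of the non-degeneracy of $h$ on $V$ together with the orthogonality hypothesis.
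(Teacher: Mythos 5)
Your proof is correct and follows essentially the same route as the paper: injectivity via $h(u,W)=0$ and $V=U+W$, and surjectivity by extending $\alpha$ to $V^*$ through the projection $V\to U$, lifting along $T_h$, and restricting the $U$-component. The one point you emphasize that the paper leaves implicit is the $A$-linearity of the projection, which is a reasonable thing to note.
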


\begin{proof} Suppose $T_{h_U}(u)=0$ for some $u\in U$. Since $h(u,W)=0$ and $V=U+W$, it follows that $h(u,V)=0$, so $u=0$ by the non-degeneracy of $h$. This proves that $T_{h_U}$ is injective.

    Suppose next that $\al\in U^*$. Extend $\al$ to $\be\in V^*$ via $\be(u+w)=\al(u)$. Since $h$ is non-degenerate, there is $v\in V$ such that
$\be=T_h(v)$. Now $v=u+w$ for some $u\in U$ and $w\in W$. We claim that $\al=T_{h_U}(u)$. Indeed, given any $z\in U$, we have
$$
[T_{h_U}(u)](z)=h_U(u,z)=h(u,z)=h(u+w,z)=h(v,z)=\be(z)=\al(z).
$$
\end{proof}

The Gram matrix $M\in M_k(A)$ of a list of vectors $v_1,\dots,v_k\in V$
is defined by $M_{ij}=(v_i,v_j)$.

\begin{lemma}\label{gram} Suppose the Gram matrix of $u_1,\dots,u_k\in A$, say $M$, is invertible.
Then $u_1,\dots,u_k$ are linearly independent.
\end{lemma}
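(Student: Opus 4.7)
The plan is a direct linear-algebra argument, exploiting the fact that $h$ is linear in its second variable and the module structure on $V$ is on the right.

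Suppose we have a relation $u_1 a_1 + u_2 a_2 + \cdots + u_k a_k = 0$ for some $a_1, \dots, a_k \in A$; the goal is to force every $a_i$ to be zero. The natural thing to do is to test this relation against each $u_j$ through $h$. Applying $h(u_j, -)$ to the relation and using linearity in the second slot together with the right-module action, one obtains
$$
\sum_{i=1}^{k} h(u_j, u_i)\, a_i = 0, \qquad j = 1, \dots, k,
$$
which, by the definition of the Gram matrix $M$ (with $M_{ji} = h(u_j, u_i)$), is exactly the column-vector identity $M \mathbf{a} = 0$, where $\mathbf{a} = (a_1, \dots, a_k)^T \in A^k$.

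Since $M$ is assumed invertible in $M_k(A)$, one may multiply on the left by $M^{-1}$ to conclude $\mathbf{a} = 0$, i.e.\ each $a_i = 0$. This establishes linear independence of $u_1, \dots, u_k$.

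There is essentially no obstacle here: the only point requiring a moment's care is the direction of the module action, so that the scalars $a_i$ come out on the right of $h(u_j, u_i)$ and the computation genuinely produces the matrix equation $M\mathbf{a}=0$ (rather than something transposed). The non-degeneracy hypothesis on $h$ from the ambient setup is not even invoked; only invertibility of $M$ is used.
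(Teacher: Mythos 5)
Your proof is correct and is essentially the paper's own argument: test the relation against each $u_j$ via $h$, read off the matrix equation $M\mathbf{a}=0$, and invert $M$. The paper's proof is just a terser version of the same computation, and your remark that non-degeneracy of $h$ is not needed is accurate.
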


\begin{proof} Suppose $u_1a_1+\cdots+u_ka_k=0$. Then $h(u_i,u_1)a_1+\cdots+h(u_i,u_k)a_k=0$ for every $1\leq i\leq k$.
Since $M$ is invertible, we deduce that $a_1=\cdots=a_k=0$.
\end{proof}

We make the following assumptions on $A$ for the remainder of the paper:

\medskip

(A1) $A$ is a local ring (this means that $A/\r$ is a division ring or, alternatively, that every element of $A$ is either in $U(A)$ or in $\r$).

(A2) $2\in U(A)$.

(A3) If $a\in A$ and $a^*=-a$ then $a\in\r$; in particular, $b-b^*\in\r$ for all $b\in A$.

(A4) $\r$ is nilpotent; the nilpotency degree of $\r$ will be denoted by $e$.

\begin{exa}\label{tresdos}{\rm Let $B$ be a local commutative ring with nilpotent Jacobson radical $\b$. Suppose $2\in U(B)$ and let $\si$
be an automorphism of $B$ of order $\leq 2$. Consider
the twisted polynomial ring $C=B[t; \si]$. Then $C$ has a unique involution $*$ that sends $t$ to $-t$ and fixes every $b\in B$. Thus
$$
(b_0+b_1t+b_2 t^2+b_3 t^3+\cdots)^*=b_0-b_1^\si t+b_2 t^2-b_3^\si t^3+\cdots{\rm ( finite\; sum)}
$$
Given any $b\in\b$, the ideal $(t^2-b)$ of $C$ is $*$-invariant, so the quotient ring $A=C/(t^2-b)$ inherits an involution from $C$ and satisfies all our requirements. Note that if $\si\neq 1_B$ then $A$ is not commutative. Two noteworthy special cases are the following:

$\bullet$ Let $D$ be a local principal ideal domain with finite residue field of odd characteristic and let $B$ be a quotient of $D$ by a positive power of its maximal ideal; take $\si=1_B$ and let $b$ be a generator of $\b$. Then $A$ is a finite, commutative, principal ideal, local ring.

$\bullet$ Let $B$ be a finite, commutative, principal ideal, local ring with Jacobson radical $\b$. Suppose $2\in U(B)$ and let $\si\neq 1$
be an automorphism of $B$ of order 2 (as an example, take $A$ and its involution,
as in the previous case). Take $b$ to be a generator of $\b$ and let $a=t+(t^2-b)\in A$. Then $Aa=aA$ is the Jacobson radical of $A$. Moreover, every left
(resp. right) ideal of $A$ is
a power of $Aa$ (and hence an ideal).  Furthermore, note that $A/Aa\cong B/Bb$.}
\end{exa}

We will also make the following assumption on $V$ for the remainder of the paper:

\medskip

(A5) $V$ is a free $A$-module of finite rank $n>0$ (reducing $V$ modulo $\r$, we see the rank of $A$ is well-defined).

In what follows we  write $(u,v)$ instead of $h(u,v)$.

\begin{lemma}\label{basis} Any linearly independent list of vectors from $V$
is part of a basis; if the list has $n$ vectors, it is already a basis, and no list has more than $n$ vectors.
\end{lemma}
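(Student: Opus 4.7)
The strategy is to pass to the $(A/\r)$-vector space $\overline{V}=V/V\r$, which is a vector space of dimension $n$ since $V$ is free of rank $n$. The central technical point I would establish first is the following claim: if $v_1,\dots,v_k\in V$ are linearly independent over $A$, then their images $\overline{v}_1,\dots,\overline{v}_k$ in $\overline{V}$ are linearly independent over $A/\r$.

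To prove this claim, I would argue by contradiction. A nontrivial dependence among the images lifts to a relation $\sum_i v_i a_i\in V\r$ with $a_j\in U(A)$ for some $j$. Since $\r^e=0$, we have $V\r\cdot\r^{e-1}\subseteq V\r^e=0$. Picking any nonzero $s\in\r^{e-1}$ (which exists by the very definition of the nilpotency degree $e$) and right-multiplying the containment by $s$ yields $\sum_i v_i(a_i s)=0$; the coefficient $a_j s$ is nonzero because $a_j\in U(A)$ and $s\neq 0$, contradicting the linear independence of $v_1,\dots,v_k$.

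Granted the claim, the three assertions follow quickly. The impossibility of a linearly independent list with more than $n$ entries is immediate, since otherwise one would have more than $n$ linearly independent vectors in the $n$-dimensional space $\overline{V}$. To extend an arbitrary linearly independent list $v_1,\dots,v_k$ to a basis, I would extend $\overline{v}_1,\dots,\overline{v}_k$ to a basis of $\overline{V}$ by adjoining $\overline{w}_{k+1},\dots,\overline{w}_n$, lift these to $w_{k+1},\dots,w_n\in V$, and then show that the combined $n$-tuple is a basis via a matrix criterion: letting $M\in M_n(A)$ be the coordinate matrix of $v_1,\dots,v_k,w_{k+1},\dots,w_n$ in any fixed basis of $V$, the reduction $\overline{M}$ is invertible in $M_n(A/\r)$; lifting a right inverse of $\overline{M}$ yields $N\in M_n(A)$ with $MN=I+R$, $R\in M_n(\r)$, and since $R^e=0$ (entries of $R^e$ lie in $\r^e=0$) the matrix $I+R$ is invertible via the finite geometric series $\sum_{i=0}^{e-1}(-R)^i$. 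Hence $M$ has a right inverse, and a symmetric argument supplies a left inverse, so the combined list is a basis. The case $k=n$ recovers the remaining clause that an $n$-element linearly independent list is already a basis.

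The main obstacle is the key claim itself: linear independence does not in general pass to quotient modules, and the argument genuinely exploits both the nilpotency of $\r$ and the fact that coefficients outside $\r$ are units in $A$. Once that is in hand, everything else reduces to standard Nakayama-style matrix lifting, which is elementary given the finite nilpotency of $\r$.
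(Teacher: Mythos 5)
Your proof is correct. It is built on the same essential trick as the paper's --- a nonzero $s\in\r^{e-1}$ satisfies $\r s=0$ (equivalently $s\r=0$ on the other side), so multiplying a relation that holds only modulo $V\r$ by $s$ kills the radical part and produces a genuine dependence with a nonzero coefficient $a_js$ --- but you organize the argument differently. The paper runs a direct Steinitz-type exchange: it takes a basis $v_1,\dots,v_n$, writes $u_1$ in terms of it, uses the annihilator trick to find a unit coefficient, swaps, and repeats; the conclusion falls out of the exchange process itself. You instead isolate the reusable statement that linear independence over $A$ descends to linear independence over the division ring $A/\r$ in $\overline V=V/V\r$, then do all the combinatorics (bounding the length of independent lists, extending to a basis) in the vector space $\overline V$, and finally lift back via the matrix criterion that $M$ is invertible whenever $\overline M$ is, since $I+R$ with $R\in M_n(\r)$ is a unit by nilpotency. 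Your route is slightly longer but modular: the descent claim and the matrix-lifting step are each of independent use (the paper itself invokes the latter, via $M_n(\r)=\mathrm{rad}\,M_n(A)$, in Corollary \ref{bas}), whereas the paper's exchange argument is shorter and entirely self-contained. All the steps you sketch (the lift of a nontrivial dependence to $\sum_i v_ia_i\in V\r$ with some $a_j\in U(A)$, the containment $V\r\cdot\r^{e-1}\subseteq V\r^e=0$, and the two-sided invertibility of $M$ from the invertibility of $MN$ and $NM$) check out over the noncommutative local ring $A$.
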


\begin{proof} Suppose $u_1,\dots,u_k\in V$ are linearly independent and
$v_1,\dots,v_n$ is a basis of $V$. By (A4) there is some $r\neq 0$
in $\r^{e-1}$ such that $\r r=0$. Write
$u_1=v_1a_1+\cdots+v_na_n$, where $a_i\in A$. If all $a_i\in\r$
then $u_1r=0$, contradicting linear independence. By (A1), we may assume
without loss of generality that $a_1\in U(A)$, whence
$u_1,v_2,\dots,v_n$ is a basis of $V$. Next write
$u_2=u_1b_1+v_2b_2+\cdots+v_nb_n$, where $b_i\in A$. If $b_i\in\r$
for all $i>1$ then $u_1(-b_1r)+u_2r=0$, contradicting linear
independence. This process can be continued and the result
follows.
\end{proof}

\begin{cor}\label{bas} If $v_1,\dots,v_n$ is a basis of $V$, $\i$ is a proper ideal of $A$, and $v_i\equiv u_i\mod V\i$ for all $1\leq i\leq n$, then $u_1,\dots,u_n$ is also a basis of $V$.
\end{cor}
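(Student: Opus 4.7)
The plan is to convert the congruences $u_i\equiv v_i\bmod V\i$ into an invertible change-of-basis matrix, and then invoke Lemma \ref{basis}. Writing each $u_i$ in the basis $v_1,\dots,v_n$, we obtain a matrix $C\in M_n(A)$ with $u_i=\sum_j v_j C_{ji}$. The hypothesis then says $C=I+N$ for some $N\in M_n(\i)$.

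The crucial observation is that $\i\subseteq\r$. This is immediate from (A1): since $\i$ is a proper ideal of a local ring, no element of $\i$ is a unit, so every element of $\i$ lies in $\r$. Hence $N\in M_n(\r)$, and by a straightforward induction the entries of $N^k$ lie in $\r^k$. Assumption (A4) now yields $N^e=0$, so
$$
C^{-1}=\sum_{k=0}^{e-1}(-1)^k N^k
$$
exists in $M_n(A)$, and $C$ is invertible.

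With $C$ invertible, I would finish by establishing linear independence of the $u_i$ and applying Lemma \ref{basis}. If $\sum_i u_i a_i=0$, expanding in the basis $v_1,\dots,v_n$ gives $\sum_j v_j\bigl(\sum_i C_{ji}a_i\bigr)=0$, so the uniqueness of coordinates in the basis $v_1,\dots,v_n$ forces $Ca=0$; invertibility of $C$ then yields $a=0$. Thus $u_1,\dots,u_n$ is a linearly independent list of length $n$, and Lemma \ref{basis} asserts it is a basis.

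There is no serious obstacle here: the only point requiring attention is the passage $\i\subseteq\r$ (which uses (A1) and properness of $\i$) together with the nilpotency in (A4) that upgrades $N\in M_n(\r)$ to a nilpotent matrix; everything else is routine bookkeeping.
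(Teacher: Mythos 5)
Your proof is correct, but it follows a genuinely different route from the paper's. You work with the change-of-basis matrix $C\in M_n(A)$ defined by $u_i=\sum_j v_jC_{ji}$, observe $C=I+N$ with $N\in M_n(\i)\subseteq M_n(\r)$, and use the nilpotency of $\r$ (A4) to produce an explicit inverse $\sum_{k=0}^{e-1}(-1)^kN^k$, then invoke Lemma \ref{basis}. The paper instead compares the \emph{Gram matrices} $M$ of $v_1,\dots,v_n$ and $N$ of $u_1,\dots,u_n$: non-degeneracy of $h$ makes $M$ invertible, $N-M\in M_n(\i)\subseteq M_n(\r)$, and since $M_n(\r)$ is the Jacobson radical of $M_n(A)$ (cited from Herstein) an invertible element plus a radical element stays invertible; then Lemmas \ref{gram} and \ref{basis} finish. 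Your version is somewhat more elementary and self-contained: it never touches the form $h$, relying only on the module structure, (A1), and (A4), and it avoids appealing to the identification of $J(M_n(A))$. You could streamline it further by noting that once $N\in M_n(\r)=J(M_n(A))$, the element $I+N$ is automatically a unit of $M_n(A)$, so the explicit nilpotency computation is optional; also, once $C$ is known to be invertible, the $u_i$ are immediately a generating set as well, so the separate linear-independence argument before invoking Lemma \ref{basis} is belt-and-braces. None of this is a gap; the argument as written is valid.
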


\begin{proof} Let $M$ (resp. $N$) be the Gram matrix of $v_1,\dots,v_n$ (resp. $u_1,\dots,u_n$). Then, by assumption, $N=M+P$, for some
$P\in M_n(\i)$, so $P\in M_n(\r)$ by (A1). It is well-known (see
\cite[Theorem 1.2.6]{H}) that $M_n(\r)$ is the Jacobson radical of $M_n(A)$. On the other hand, by assumption and Lemma \ref{gram}, $M\in\GL_n(A)$. It follows that $N\in\GL_n(A)$ as well.
\end{proof}

\begin{lemma}\label{basin} Let $v_1,\dots,v_n$ be a basis of $V$. Then, given any $1\leq i\leq n$, there exists $1\leq j\leq n$
such that $j\neq i$ and $(v_i,v_j)\in U(A)$.
\end{lemma}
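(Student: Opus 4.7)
The plan is to exploit non-degeneracy of $h$ together with (A3), which forces each diagonal entry $(v_i,v_i)$ into $\r$, so that the sought-for unit entry must appear off-diagonal. More precisely, the skew-hermitian identity gives $(v_i,v_i)=-(v_i,v_i)^*$, whence (A3) yields $(v_i,v_i)\in\r$. I will then argue by contradiction: assume $(v_i,v_j)\in\r$ for every $j\neq i$. Combined with the previous observation, $(v_i,v_j)\in\r$ for every $j$, so writing an arbitrary $v=\sum_j v_j a_j\in V$ and using linearity in the second argument, $h(v_i,v)=\sum_j(v_i,v_j)a_j\in\r$, since $\r$ is an ideal. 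Hence the functional $\alpha:=T_h(v_i)\in V^*$ satisfies $\alpha(V)\subseteq\r$.

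The key step will be to upgrade the condition $\alpha(V)\subseteq\r$ to $\alpha\in V^*\r$. For this I plan to introduce the dual family $v_1^*,\dots,v_n^*\in V^*$ determined by $v_j^*(v_i)=\delta_{ij}$, and check, via a short calculation with the twisted action $(\beta a)(u)=a^*\beta(u)$, that these form an $A$-basis of $V^*$, with the feature that if $\alpha=\sum_j v_j^* b_j$ then $\alpha(v_i)=b_i^*$. Since each $\alpha(v_i)\in\r$ and $\r$ is $*$-invariant, every $b_i$ lies in $\r$, so $\alpha\in V^*\r$.

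To close the argument, I will use that $T_h$ is $A$-linear, with $T_h(vr)=T_h(v)\cdot r$ being an immediate consequence of the twisted right action on $V^*$, and a bijection, so $T_h(V\r)=V^*\r$, forcing $v_i\in V\r$. Matching this against the trivial expression $v_i=\sum_j v_j\delta_{ij}$ and using linear independence of the basis forces $1\in\r$, contradicting (A1). The main obstacle I anticipate is keeping track of the $*$-twist in the dual-basis bookkeeping (in particular, the fact that coefficients get conjugated when one evaluates at basis vectors, which is exactly what makes the passage from $b_i^*\in\r$ to $b_i\in\r$ go through); once this is cleanly handled, the remaining steps are routine.
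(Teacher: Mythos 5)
Your proof is correct and follows essentially the same route as the paper's: assume all off-diagonal entries $(v_i,v_j)$ are non-units, hence by (A1) and (A3) the entire row $(v_i,V)$ lies in $\r$, and contradict non-degeneracy of $T_h$ via the conclusion $1\in\r$. The only cosmetic difference is that you exploit injectivity of $T_h$ together with the dual-basis identification $T_h(V\r)=V^*\r$, whereas the paper more briefly uses surjectivity of $T_h$ to note that every functional would then take a value in $\r$ at $v_i$, contradicting the existence of the coordinate functional.
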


\begin{proof} Suppose, if possible, that $(v_i,v_j)\notin U(A)$ for all $j\neq i$. Then by (A1), $(v_i,v_j)\in \r$ for all $j\neq i$.
Since $(v_i,v_i)\in\r$ by (A3), it follows that
$(v_i,V)\in\r$. As $v_1,\dots,v_n$ span $V$ and $T_h:V\to V^*$ is surjective, every linear functional $V\to A$
has values in $\r$, a contradiction.
\end{proof}

\begin{cor}\label{basincor} If $v\in V$ is a basis vector, there is some $w\in V$ such that $(v,w)=1$.\qed
\end{cor}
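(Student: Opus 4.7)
The plan is to combine Lemma \ref{basis} (extension to a basis) with Lemma \ref{basin} (existence of a companion with unit pairing), and then rescale to turn the unit into $1$.

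More precisely, since $v$ is a basis vector, I would first invoke Lemma \ref{basis} to extend $v$ to a basis $v_1,\dots,v_n$ of $V$ with $v_1=v$. Then Lemma \ref{basin} applied to the index $i=1$ supplies some $j\neq 1$ such that $a:=(v_1,v_j)\in U(A)$. Because $h$ is $A$-linear in the second variable, setting $w:=v_j a^{-1}$ yields
\[
(v,w)=(v_1,v_j a^{-1})=(v_1,v_j)a^{-1}=a a^{-1}=1,
\]
which is exactly what is claimed.

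There is no real obstacle; the statement is essentially a packaging of Lemma \ref{basin} together with the fact (assumption (A2) and the invertibility of $a$) that one may normalize a unit pairing to $1$ by a right scalar multiplication. The only thing one has to be careful about is using $A$-linearity in the second argument rather than the first (where a $*$ would appear), but since $a\in U(A)$ the conclusion would hold either way after replacing $a$ by $a^*$.
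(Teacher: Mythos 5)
Your argument is correct and is exactly the intended one: the paper states the corollary with no written proof because it is the immediate combination of Lemma \ref{basis} (extend $v$ to a basis), Lemma \ref{basin} (some $(v,v_j)$ is a unit), and linearity in the second variable to rescale the unit to $1$. The only tiny quibble is that (A2) plays no role here -- invertibility of $(v,v_j)$ alone suffices.
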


\begin{lemma}\label{uno} Let $\i$ be a proper ideal of $A$. Suppose $v\in V$ is a basis vector such that $(v,v)\in \i$.
Then there is a basis vector $z\in V$ such that $v\equiv z\mod V\i$ and $(z,z)=0$.
\end{lemma}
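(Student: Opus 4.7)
The plan is to construct $z$ by successive approximation. Starting from $v$, I repeatedly add a correction of the form $wa$, for carefully chosen $a\in\i$, using a companion vector $w$ satisfying $(v,w)=1$ (produced by Corollary \ref{basincor}); each correction will push the self-pairing $(v,v)$ into a strictly higher power of $\r$, and the nilpotency (A4) will then force it to vanish in finitely many steps.

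Concretely, let $c=(v,v)$. Because $h$ is skew-hermitian, $c^*=-c$, so (A3) gives $c\in\r$; combined with $c\in\i$, this yields $c\in\i\cap\r$. Set $a=-c/2$ (defined since $2\in U(A)$) and observe that $a^*=-a$. Put $v_1=v+wa$. Since $a\in\i$, we have $v_1\equiv v\pmod{V\i}$, and $v_1$ is again a basis vector by Corollary \ref{bas} applied to any extension of $v$ to a basis. The key computation, using $(v,w)=1$ and $(w,v)=-(v,w)^*=-1$, is
$$
(v_1,v_1)=c+(v,w)a+a^*(w,v)+a^*(w,w)a=c+a-a^*+a^*(w,w)a=a^*(w,w)a,
$$
since $a-a^*=2a=-c$. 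By (A3) $(w,w)\in\r$, and $a\in\r$, so this lies in $\r^3$; and since $a\in\i$ and $\i$ is a two-sided ideal, it also lies in $\i$.

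I would then iterate. Assume inductively that $v_k$ is a basis vector with $v_k\equiv v\pmod{V\i}$ and $(v_k,v_k)\in\i\cap\r^{n_k}$. Apply Corollary \ref{basincor} to $v_k$ to pick $w_k$ with $(v_k,w_k)=1$, and repeat the construction above with $v_k, w_k$ in place of $v, w$ to obtain $v_{k+1}$ with $v_{k+1}\equiv v_k\pmod{V\i}$ and $(v_{k+1},v_{k+1})\in\i\cap\r^{2n_k+1}$. Starting from $n_0=1$ gives $n_k\geq 2^{k+1}-1$, which eventually exceeds the nilpotency index $e$, at which point $(v_k,v_k)=0$. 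Taking $z=v_k$ and chaining the congruences $v_{j+1}\equiv v_j\pmod{V\i}$ delivers the conclusion.

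The only delicate point is the book-keeping at each iteration: verifying that the correction continues to live in $\i$ (which uses only that $\i$ is an ideal and $a_k\in\i$), that $v_k$ remains a basis vector (immediate from Corollary \ref{bas}), and that the exponent $n_k$ really doubles as claimed. There is no serious obstacle; the entire argument is driven by the identity $(v+wa,v+wa)=(v,v)+a-a^*+a^*(w,w)a$, the fact that both $(v,v)$ and $(w,w)$ are forced into $\r$ by (A3), and the termination coming from nilpotency of $\r$.
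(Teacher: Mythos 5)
Your proposal is correct and is essentially the paper's own argument: the same correction $v\mapsto v+w\bigl(-(v,v)/2\bigr)$, the same identity $(v+wb,v+wb)=(v,v)+b-b^*+b^*(w,w)b$, and termination by nilpotency. The only difference is organizational — the paper runs an induction on the nilpotency degree of $\i$ (the self-pairing lands in $\i^2$ at each step), while you iterate explicitly and track powers of $\r$; both bookkeeping schemes work.
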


\begin{proof} It follows from (A1) and (A4) that $\i$ is nilpotent, and we argue by induction on the nilpotency degree, say $f$, of $\i$. If $f=1$ then $\i=0$ and we take $z=v$. Suppose $f>1$
and the result is true for ideals of nilpotency degree less than $f$.
By Corollary \ref{basincor}, there is $w\in V$ such that $(v,w)=1$. Observing that the Gram matrix of $v,w$ is invertible,
Lemmas \ref{gram} and \ref{basis} imply that $v,w$ belong to a common basis of $V$.
Then, for any $b\in A$, $v+wb$ is also a basis vector and, moreover,
$$
(v+wb,v+wb)=(v,v)+b^*(w,w)b+b-b^*.
$$
Thanks to (A2), we may take $b=-(v,v)/2\in \i$. Then $b^*=-b$, so $b-b^*=-(v,v)$ and
$$
(v+wb,v+wb)=b^*(w,w)b=-b(w,w)b\in \i^2.
$$
As the nilpotency degree of $\i^2$ is less than $f$, by induction hypothesis
there is a basis vector $z\in V$ such that $v+wb\equiv z\mod V\i^2$ and $(z,z)=0$. Since $v\equiv v+wb\equiv z\mod V\i$, the result follows.
\end{proof}

\begin{lemma}\label{tres} Let $\i$ be a proper ideal of $A$. Suppose $u,v\in V$ satisfy
$(u,v)\equiv 1\mod \i$. Then there is $z\in V$ such that $z\equiv v\mod V\i$ and $(u,z)=1$.
\end{lemma}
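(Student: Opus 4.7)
The plan is to take $z$ to be a rescaling of $v$ by a unit in $A$, rather than adding a correction inside $V\i$. Write $a = (u,v) - 1$, so $a\in\i$. Since $A$ is local by (A1) and $\i$ is a proper ideal, $\i\subseteq\r$; hence $a\in\r$ and $(u,v)=1+a\in U(A)$. This is the observation that makes a rescaling approach viable: the obstruction $(u,v)$ is already invertible, so one can simply divide it out.

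Accordingly, set $z := v\cdot (u,v)^{-1}$. By right $A$-linearity of $h$ in its second argument we immediately get $(u,z) = (u,v)(u,v)^{-1} = 1$. To check $z\equiv v\bmod V\i$, compute
$$
z - v \;=\; v\bigl((u,v)^{-1}-1\bigr) \;=\; -v\,(1+a)^{-1}a,
$$
using the identity $(1+a)^{-1}-1 = -(1+a)^{-1}a$. Because $\i$ is a two-sided ideal of $A$, the element $(1+a)^{-1}a$ still lies in $\i$, so $z-v\in V\i$ as required. There is no real obstacle: the only subtlety is that invertibility of $(u,v)$ relies on (A1) (so that $\i\subseteq\r$), and that the closure $(1+a)^{-1}a\in\i$ needs $\i$ to be an ideal on the correct side, both of which are clearly in force.
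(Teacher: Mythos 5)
Your proof is correct and follows exactly the paper's own argument: observe that $(u,v)=1+a$ with $a\in\i\subseteq\r$ is a unit, set $z=v(u,v)^{-1}$, and note that $(u,v)^{-1}\equiv 1\bmod\i$ gives $z\equiv v\bmod V\i$. The extra detail you supply (the identity $(1+a)^{-1}-1=-(1+a)^{-1}a$) just makes explicit what the paper leaves implicit.
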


\begin{proof}
    Since $(u,v) \equiv 1 \mod \i$, $(u,v)$ must be a unit. Moreover,
    $(u,v)^{-1} \equiv 1 \mod \i$, so we can take $z = v(u,v)^{-1}$.
\end{proof}

\begin{lemma}\label{dos} Let $\i$ be an ideal of $A$. Suppose $u,v\in V$ satisfy $(u,u)=0$, $(u,v)=1$ and $(v,v)\in \i$.
Then there is $z\in V$ such that $z\equiv v\mod V\i$, $(u,z)=1$ and $(z,z)=0$.
\end{lemma}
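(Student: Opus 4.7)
The plan is to seek $z$ of the form $z = v + ua$ for a suitable $a \in \i$, and then to pin down $a$ by writing out what each of the three required conditions demands.

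First, the congruence $z \equiv v \mod V\i$ will follow automatically provided $a \in \i$. Next, since $(u,u)=0$ and $h$ is linear in the second variable,
$$
(u,z) = (u,v) + (u,u)a = 1 + 0 = 1,
$$
so this condition holds for \emph{any} choice of $a$. The only substantive condition is $(z,z)=0$. Expanding and using $(u,u)=0$, $(u,v)=1$, $(v,u)=-(u,v)^*=-1$, and sesquilinearity,
$$
(z,z) = (v,v) + (v,u)a + a^*(u,v) + a^*(u,u)a = (v,v) - a + a^*.
$$
Thus the requirement becomes $a - a^* = (v,v)$.

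So the plan reduces to choosing $a \in \i$ with $a - a^* = (v,v)$. Here I use that $h$ is skew-hermitian, so $(v,v)^* = -(v,v)$, combined with assumption (A2) that $2 \in U(A)$: set $a = (v,v)/2$. Then $a^* = -a$, so $a - a^* = 2a = (v,v)$, as required. Since $(v,v) \in \i$ and $\i$ is closed under the action of $A$, we get $a \in \i$, hence $ua \in V\i$ and $z \equiv v \mod V\i$.

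There is really no obstacle here; everything is a short direct computation. The only place one must be slightly careful is in expanding $(v+ua, v+ua)$ with the correct sesquilinearity conventions (left-antilinear, right-linear, with $(v,u) = -(u,v)^*$), but once that is done the choice of $a$ is forced by the skew-hermitian identity $(v,v)^* = -(v,v)$ together with invertibility of $2$.
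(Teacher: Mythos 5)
Your proposal is correct and is essentially the paper's own proof: both take $z = ub + v$ with $b = (v,v)/2$, observe $b^* = -b$ from skew-hermitian symmetry, and verify the three conditions by the same direct expansion. Nothing to add.
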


\begin{proof} Set $b=(v,v)/2\in \i$ and $z=ub+v$. Then $b^*=-b$, so that $b^*-b=-(v,v)$ and
$$
(z,z)=(ub+v,ub+v)=b^*-b+(v,v)=0,\quad (u,z)=(u,u)b+(u,v)=1.
$$
\end{proof}

A symplectic basis of $V$ is a basis $u_1,\dots,u_m,v_1,\dots,v_m$ such that $(u_i,u_j)=0=(v_i,v_j)$ and
$(u_i,v_j)=\delta_{ij}$. A pair of vectors $u,v$ of $V$ is symplectic if $(u,v)=1$ and $(u,u)=0=(v,v)$.

\begin{lemma}\label{exte}
Suppose the Gram matrix, say $M$, of $v_1,\dots,v_s$ is invertible. If $s<n$ then
there is a basis $v_1,\dots,v_s,w_1,\dots,w_t$ of $V$ such that $(v_i,w_j)=0$ for all $i$ and $j$.
\end{lemma}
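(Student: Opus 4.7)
The plan is to extend $v_1,\dots,v_s$ to a basis of $V$ in the naive way and then do a single Gram--Schmidt-style correction of the new vectors, exploiting the invertibility of $M$. First, by Lemma~\ref{gram}, the invertibility of $M$ implies that $v_1,\dots,v_s$ are linearly independent, and by Lemma~\ref{basis} they can be completed to a basis $v_1,\dots,v_s,u_1,\dots,u_t$ of $V$, with $t=n-s$.

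Next, I would seek each desired $w_j$ in the form $w_j=u_j+\sum_{k=1}^s v_k b_{kj}$ with $b_{kj}\in A$ to be determined. Using the linearity of $h$ in the second variable,
$$
(v_i,w_j)=(v_i,u_j)+\sum_{k=1}^s (v_i,v_k)\,b_{kj}=c_{ij}+(MB)_{ij},
$$
where $c_{ij}=(v_i,u_j)$ and $B=(b_{kj})\in M_{s\times t}(A)$. Demanding $(v_i,w_j)=0$ for all $i,j$ is thus the linear system $MB=-C$, which has the unique solution $B=-M^{-1}C$ because $M$ is invertible.

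It remains to confirm that $v_1,\dots,v_s,w_1,\dots,w_t$ is still a basis of $V$. This is immediate from the fact that the transition matrix expressing the new list in terms of the basis $v_1,\dots,v_s,u_1,\dots,u_t$ is the block upper unitriangular matrix
$$
\begin{pmatrix} I_s & B \\ 0 & I_t \end{pmatrix}\in\GL_n(A).
$$
The main point where care is needed is simply keeping the scalar coefficients $b_{kj}$ on the correct side when forming linear combinations, since $A$ need not be commutative and $h$ is conjugate-linear in the first variable; once the coefficients are placed on the right of $v_k$, as above, the computation goes through with no involution appearing and the only nontrivial input is the invertibility of $M$.
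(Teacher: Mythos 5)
Your proof is correct and follows essentially the same path as the paper: complete $v_1,\dots,v_s$ to a basis via Lemmas~\ref{gram} and~\ref{basis}, then modify the new vectors by $A$-linear combinations of $v_1,\dots,v_s$ on the right, solving the resulting linear system using $M^{-1}$. The explicit block-unitriangular transition matrix is a slightly more detailed justification that the corrected list is still a basis, but the argument is otherwise identical to the one in the paper.
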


\begin{proof} By Lemmas \ref{gram} and \ref{basis}, there is a basis $v_1,\dots,v_s,u_1,\dots,u_t$ of $V$. Given $1\leq i\leq t$, we wish to find $a_1,\dots,a_s$
so that $w_i=u_i-(v_1a_1+\cdots+v_s a_s)$ is orthogonal to all $v_j$. This means
$$
(v_j,v_1)a_1+\cdots+(v_j,v_s)a_s=(v_j,u_i),\quad 1\leq j\leq s.
$$
This linear system can be solved by means of $M^{-1}$. Since $v_1,\dots,v_s,w_1,\dots,w_t$ is a basis of $V$,
the result follows.
\end{proof}

\begin{prop}\label{sp} $V$ has a symplectic basis; in particular $n=2m$ is even.
\end{prop}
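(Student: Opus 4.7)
The plan is to induct on the rank $n$ of $V$, peeling off one symplectic pair at each step. First, the case $n=1$ must be ruled out: if $\{v_1\}$ were a basis of $V$ then $(v_1,v_1)\in\r$ by (A3) would force $(v_1,V)\subseteq\r$, contradicting the non-degeneracy of $h$. (This is essentially what Lemma \ref{basin} rules out, since no $j\neq 1$ can exist when $n=1$.) Thus $n\geq 2$.

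For the inductive step I would manufacture a symplectic pair inside $V$ and then split it off orthogonally. Pick any basis vector $v$; by (A3) one has $(v,v)\in\r$, and since $\r$ is a proper ideal, Lemma \ref{uno} produces a basis vector $u$ congruent to $v$ modulo $V\r$ with $(u,u)=0$. Corollary \ref{basincor} then yields $w\in V$ with $(u,w)=1$, and (A3) forces $(w,w)\in\r$. Feeding $u$, $w$ and $\i=\r$ into Lemma \ref{dos} produces $v'\in V$ with $(u,v')=1$ and $(v',v')=0$, so $(u,v')$ is a symplectic pair.

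The Gram matrix of $(u,v')$ equals $\left(\begin{array}{cc} 0 & 1 \\ -1 & 0 \end{array}\right)$, which is invertible in $M_2(A)$. If $n=2$ we are done. Otherwise Lemma \ref{exte} extends $u,v'$ to a basis $u,v',w_1,\dots,w_{n-2}$ of $V$ with $(u,w_k)=0=(v',w_k)$ for every $k$. Setting $W=\sum_k w_k A$, we obtain $V=\langle u,v'\rangle\perp W$ with $W$ free of rank $n-2$, and Lemma \ref{deco} guarantees that $h|_W$ is non-degenerate. Thus $W$ satisfies all of the proposition's hypotheses at strictly smaller rank; by induction it has a symplectic basis $u_2,\dots,u_m,v_2,\dots,v_m$, which combined with $u,v'$ yields the symplectic basis $u,u_2,\dots,u_m,v',v_2,\dots,v_m$ of $V$ and in particular forces $n=2m$.

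The main obstacle is the construction of that very first symplectic pair: because (A3) forces $(x,x)$ to be a non-unit for every $x\in V$, no vector is automatically isotropic, and there is no orthogonal basis available to peel vectors off one at a time as in the hermitian setting of \cite{CHQS}. The trick is to adjust the chosen basis vector modulo $V\r$, exploiting the halving argument underlying Lemma \ref{uno} (which depends crucially on (A2)), so as to kill its self-pairing; once this is done Lemma \ref{dos} delivers the partner, and the combination of Lemma \ref{exte} with Lemma \ref{deco} reduces the problem cleanly to strictly smaller rank.
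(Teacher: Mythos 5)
Your proof is correct and follows essentially the same route as the paper: Lemma \ref{uno} (with $\i=\r$) to produce an isotropic basis vector, Corollary \ref{basincor} and Lemma \ref{dos} to complete it to a symplectic pair, then Lemma \ref{exte} and Lemma \ref{deco} to split off the orthogonal complement and induct on the rank. Your explicit exclusion of $n=1$ is a harmless addition implicit in the paper's use of Lemmas \ref{gram} and \ref{basis}.
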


\begin{proof} Let $w_1,\dots,w_n$ be a basis of $V$, whose existence is guaranteed by (A5). We argue by induction on $n$.

By (A3), $(w_1,w_1)\in\r$. We infer from Lemma \ref{uno} the existence of a basis vector $u\in V$ such that $(u,u)=0$.
By Corollary \ref{basincor}, there is $w\in V$ such that $(u,w)=1$. By Lemma \ref{dos}, there
is $v\in V$ such that $(v,v)=0$ and $(u,v)=1$.

It follows from Lemmas \ref{gram} and \ref{basis} that $u,v$ is part of a basis of $V$.
If $n=2$ we are done. Suppose $n>2$ and the result is true for smaller ranks. By Lemma \ref{exte} there is basis $u,v,w_1,\dots,w_{n-2}$ of $V$
such that every $w_i$ is orthogonal to both $u$ and $v$. Let $U$ be the span of $u,v$ and let $W$ be the span of $w_1,\dots,w_{n-2}$.
By Lemma \ref{deco}, the restriction of $h$ to $W$ is non-degenerate. By inductive assumption, $n-2$ is even, say $n-2=2(m-1)$, and $W$ has a symplectic basis, say $u_2,\dots,u_m,v_2,\dots,v_m$. It follows that $u_1,u_2,\dots,u_m,v,v_2,\dots,v_m$ is a symplectic basis of $V$.
\end{proof}

\begin{cor} All non-degenerate skew-hermitian forms on $V$ are equivalent.\qed
\end{cor}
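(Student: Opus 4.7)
The plan is to deduce the corollary directly from Proposition \ref{sp}. Suppose $h$ and $h'$ are two non-degenerate skew-hermitian forms on the same free $A$-module $V$. By Proposition \ref{sp} applied to each form, $V$ admits a symplectic basis $u_1,\dots,u_m,v_1,\dots,v_m$ with respect to $h$ and a symplectic basis $u_1',\dots,u_m',v_1',\dots,v_m'$ with respect to $h'$. In particular, the rank of $V$ must be even for each form, so the two values of $m$ agree (this is consistent since $V$ is fixed).

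Next I would define the $A$-linear endomorphism $T:V\to V$ by $T(u_i)=u_i'$ and $T(v_i)=v_i'$. Since $T$ sends a basis to a basis, $T\in\GL(V)$. A direct computation on generators shows that for any pair of basis vectors $x,y\in\{u_1,\dots,u_m,v_1,\dots,v_m\}$, one has $h(x,y)=h'(Tx,Ty)$, because both forms take the value $0$ on the $uu$ and $vv$ pairs and $\delta_{ij}$ on the $u_iv_j$ pairs. By bilinearity (more precisely, by sesquilinearity of $h$ and $h'$ combined with $A$-linearity of $T$), it follows that $h(x,y)=h'(Tx,Ty)$ for all $x,y\in V$, which is exactly the statement that $h$ and $h'$ are equivalent via $T$.

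There is no real obstacle here: the essentially non-trivial work has been done in Proposition \ref{sp}, and the corollary is a one-line consequence (which is why the statement in the excerpt carries only \qed). Equivalently, one can phrase this in matrix language: fixing any basis of $V$, the Gram matrices of $h$ and $h'$ relative to the two symplectic bases both equal $J$, so a change of basis matrix $X\in\GL_{2m}(A)$ satisfies $X^*G_{h'}X=J=Y^*G_h Y$ for a suitable $Y$, and hence $G_{h'}=(XY^{-1})^{-*}G_h(XY^{-1})^{-1}$, exhibiting the equivalence.
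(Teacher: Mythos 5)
Your argument is correct and is exactly the intended one: the paper leaves the corollary with only a \qed because it follows immediately from Proposition \ref{sp}, since any two non-degenerate skew-hermitian forms on $V$ admit symplectic bases and hence have the same Gram matrix $J$, so the change-of-basis map gives the equivalence. Your spelled-out version (define $T$ on the two symplectic bases and check equality of the forms on basis vectors, then extend by sesquilinearity) is precisely this reasoning made explicit.
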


\section{The unitary group acts transitively on basis vectors of the same length}\label{s2}

By definition, the unitary group associated to $(V,h)$ is the subgroup, say $U(V,h)$, of $\GL(V)$ that preserves~$h$. Thus, $U(V,h)$  consists of all $A$-linear automorphisms
$g:V\to V$ such that $h(gu,gv)=h(u,v)$ for all $u,v\in V$.

\begin{theorem}\label{actra} Let $u,v\in V$ be basis vectors satisfying $(u,u)=(v,v)$. Then there exists $g\in U(V,h)$ such that $gu=v$.
\end{theorem}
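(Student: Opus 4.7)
The plan is to induct on $m$, where $n=2m$ by Proposition~\ref{sp}, with an explicit computation in the rank-$2$ base case and a Witt-style decomposition for the inductive step.

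\textbf{Base case $m=1$.} Fix a symplectic basis $e,f$ of $V$ and set $u_0=e+f(\ell/2)$; a direct computation using $(\ell/2)^*=-\ell/2$ gives $(u_0,u_0)=\ell$. I aim to show every length-$\ell$ basis vector lies in the $U(V,h)$-orbit of $u_0$; the conclusion then follows. Write $u=ea+fb$. Reducing modulo $\r$ (using Corollary~\ref{bas}) shows at least one of $a,b$ is a unit, else $u$ would lie in $V\r$ and fail to extend to a basis. If only $b$ is a unit, I first apply the unitary map $e\mapsto f,\ f\mapsto -e$ (easily checked to preserve $h$) to arrange $a\in U(A)$. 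Next, for any $s\in U(A)$ the map $e\mapsto es,\ f\mapsto f(s^{-1})^*$ is unitary; taking $s=a^{-1}$ carries $u$ to $e+f(a^*b)$. Finally, the ``transvection'' $\psi_\delta: e\mapsto e+f\delta,\ f\mapsto f$ is unitary exactly when $\delta\in R$, and choosing $\delta=\ell/2-a^*b$ sends $e+f(a^*b)$ to $u_0$. That $\delta\in R$ follows from $\delta-\delta^*=\ell-(a^*b-b^*a)=\ell-(u,u)=0$, using assumption (A3) to see $(\ell/2)^*=-\ell/2$.

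\textbf{Inductive step ($m>1$).} Let $u,v$ be length-$\ell$ basis vectors. By Corollary~\ref{basincor}, pick $u'\in V$ with $(u,u')=1$. The Gram matrix of $u,u'$ has determinant $\ell(u',u')+1$, which is a unit because $\ell\in\r$ by~(A3); hence by Lemmas~\ref{gram}, \ref{basis}, and~\ref{exte} we obtain $V=U\perp W$ with $U=\mathrm{span}(u,u')$ and $W$ free of rank $2(m-1)$, and $h|_W$ non-degenerate by Lemma~\ref{deco}. Construct the analogous decomposition $V=U''\perp W'$ for $v$. Proposition~\ref{sp} applies equally well to the free modules $W$ and $W'$, and any bijection between a symplectic basis of $W$ and one of $W'$ extends to an isometry $\phi:W\to W'$. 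Analogously, pick any isometry $\phi_0:U\to U''$ (both rank $2$, non-degenerate skew-hermitian). Then $\phi_0(u)\in U''$ is a length-$\ell$ basis vector, so by the base case applied inside $U''$ there is a unitary $\psi:U''\to U''$ with $\psi\phi_0(u)=v$. The direct sum of $\psi\phi_0:U\to U''$ and $\phi:W\to W'$ is an isometry $V\to V$, i.e., an element of $U(V,h)$, and it sends $u$ to $v$.

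\textbf{Main obstacle.} The substantive work is the rank-$2$ base case: one must produce enough explicit unitaries (swap, scaling, and hermitian-parameter transvection) to reduce every length-$\ell$ basis vector to a single model vector $u_0$. The key observation making the transvection step succeed is precisely that the defect $\delta=\ell/2-a^*b$ is forced to be hermitian by the length condition $(u,u)=\ell$. Once the base case is in hand, the inductive step is a routine Witt-style decomposition, since non-degenerate skew-hermitian forms of a given rank are all equivalent (Corollary to Proposition~\ref{sp}).
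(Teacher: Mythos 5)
Your proof is correct and takes essentially the same route as the paper: both reduce, via a Witt-type splitting $V=U\perp W$, to the rank-$2$ case, and then normalize a basis vector of prescribed length relative to a symplectic basis using a swap, a scaling $e\mapsto es,\ f\mapsto f(s^{-1})^*$, and a hermitian transvection $e\mapsto e+f\delta$. The only cosmetic difference is that you carry both $u$ and $v$ to a fixed model vector $u_0=e+f(\ell/2)$, whereas the paper matches the symplectic frame of $v$ to give it the same coordinates $(1,b)$ as $u$; the crucial calculation forcing the transvection parameter to be hermitian (equivalently, $c-c^*=\ell$) is the same in both.
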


\begin{proof} As $u$ is a basis vector, there is a vector $u'\in V$ such that $(u,u')=1$.
Let $W$ be the span of $u,u'$. By Lemma \ref{exte}, $V=W\oplus W^\perp$. The restrictions of $h$ to $W$ and $W^\perp$
are non-degenerate by Lemma \ref{deco}. A like decomposition exists for $v$. Thus, by means of Proposition \ref{sp},
we may restrict to the case $n=2$.

By Proposition \ref{sp}, there is a symplectic basis $x,y$ of $V$ and we have $u=xa+yb$ for some $a,b\in A$. Since $u$
is a basis vector, one of these coefficients, say $a$ is a unit. Replacing $x$ by $xa$ and $y$ by $y(a^*)^{-1}$,
we may assume that $u=x+yb$ for some $b\in A$, where $x,y$ is still a symplectic basis of $V$. We have
$b-b^*=(u,u)$. Likewise, there is a symplectic basis $w,z$ of $V$ such that
$v=w+zc$, where $c-c^*=(v,v)=(u,u)=b-b^*$. It follows that $c=b+r$ for some $r\in A$ such that $r^*=r$.

Replace $w,z$ by $w-zr,z$. This basis of $V$ is also symplectic, since $(w-zr,w-zr)=0$ (because $r-r^*=0$). Moreover, $v=(w-zr)+z(c+r)=(w-rz)+zb$. Thus, $u$ and $v$ have
exactly the same coordinates, namely $1,b$ relative to some symplectic bases of $V$. Let $g\in U$ map one symplectic basis into the other one.
Then $gu=v$, as required.
\end{proof}

\section{Reduction modulo a $*$-invariant ideal}\label{s3}

\begin{lemma}\label{util} Let $\i$ be a proper ideal of $A$. Suppose $w_1,\dots,w_m,z_1,\dots,z_m\in V$ satisfy
$$
(w_i,z_j)\equiv\delta_{ij}\mod\i,\; 1\leq i\leq m,\quad (w_i,w_j)\equiv 0\equiv (z_i,z_j)\mod\i,\; 1\leq i,j\leq m.
$$
Then there exists a symplectic basis $w'_1,\dots,w'_m,z'_1,\dots,z'_m$ of $V$ such that
$$
w_i\equiv w'_i\mod V\i,\quad z_i\equiv z'_i\mod V\i,\; 1\leq i\leq m.
$$
\end{lemma}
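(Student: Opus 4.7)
The plan is to argue by induction on $m$, after first observing that the given vectors already form a basis of $V$. Their Gram matrix equals $J$ plus a matrix with entries in $\i\subseteq\r$, and $M_{2m}(\r)$ is the Jacobson radical of $M_{2m}(A)$ (as used in the proof of Corollary \ref{bas}), so the Gram matrix lies in $\GL_{2m}(A)$. Lemmas \ref{gram} and \ref{basis} then guarantee that $w_1,\dots,w_m,z_1,\dots,z_m$ is a basis of $V$; in particular each $w_i$ and $z_j$ is a basis vector, so the vector-level lemmas of \S\ref{s1} apply.

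For the inductive step I would first straighten the pair $(w_1,z_1)$ into a true symplectic pair $(w_1',z_1')$ still congruent to $(w_1,z_1)$ modulo $V\i$. Starting from $(w_1,w_1)\in\i$, Lemma \ref{uno} produces a basis vector $w_1'\equiv w_1\bmod V\i$ with $(w_1',w_1')=0$. Since $(w_1',z_1)\equiv(w_1,z_1)\equiv 1\bmod\i$, Lemma \ref{tres} provides $z_1^\sharp\equiv z_1\bmod V\i$ with $(w_1',z_1^\sharp)=1$. Finally $(z_1^\sharp,z_1^\sharp)\equiv(z_1,z_1)\equiv 0\bmod\i$, so Lemma \ref{dos} applied to $(w_1',z_1^\sharp)$ yields $z_1'\equiv z_1\bmod V\i$ with $(w_1',z_1')=1$ and $(z_1',z_1')=0$. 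This settles the base case $m=1$.

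For $m\geq 2$, let $W_1=w_1'A+z_1'A$. The Gram matrix of $w_1',z_1'$ is invertible, so Lemma \ref{exte} yields $V=W_1\oplus W_1^\perp$ with $W_1^\perp$ free of rank $2(m-1)$, on which $h$ is non-degenerate by Lemma \ref{deco}. For each $i\geq 2$ I would project onto $W_1^\perp$ by setting
\begin{equation*}
\tilde w_i=w_i+w_1'(z_1',w_i)-z_1'(w_1',w_i),\qquad \tilde z_i=z_i+w_1'(z_1',z_i)-z_1'(w_1',z_i).
\end{equation*}
Using $(w_1',w_1')=0=(z_1',z_1')$, $(w_1',z_1')=1$ and $(z_1',w_1')=-1$, a direct check gives $\tilde w_i,\tilde z_i\in W_1^\perp$. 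Invoking the $*$-invariance of $\i$ (which is in force throughout this section) together with the hypothesis, the coefficients $(w_1',w_i),(z_1',w_i),(w_1',z_i),(z_1',z_i)$ all land in $\i$, so $\tilde w_i\equiv w_i$ and $\tilde z_i\equiv z_i\bmod V\i$. One then checks that $\tilde w_2,\dots,\tilde w_m,\tilde z_2,\dots,\tilde z_m$ satisfy the hypothesis of the lemma inside $W_1^\perp$; the inductive hypothesis produces a genuine symplectic basis of $W_1^\perp$ congruent to these vectors mod $W_1^\perp\,\i\subseteq V\i$, which together with $w_1',z_1'$ is the required symplectic basis of $V$.

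The main delicate point is confirming that $(z_1',w_i)$ and $(z_1',z_i)$ lie in $\i$: the hypothesis controls $(w_i,z_1)$ and $(z_i,z_1)$ rather than their transposes, and passing between them via $h(u,v)=-h(v,u)^*$ introduces a $*$, so the argument genuinely relies on $\i^*\subseteq\i$. Apart from this bookkeeping, the proof is a routine cascade of the single-vector adjustment lemmas from \S\ref{s1} followed by an orthogonal splitting and recursion.
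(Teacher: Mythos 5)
Your proof is correct and follows essentially the same route as the paper: straighten $(w_1,z_1)$ into a symplectic pair via Lemmas~\ref{uno}, \ref{tres}, \ref{dos}, split off its span with Lemma~\ref{exte}, replace the remaining vectors by their orthogonal projections onto the complement (your explicit formula is exactly what ``the procedure of Lemma~\ref{exte}'' produces, inverting the $2\times2$ symplectic Gram block), and recurse. Your remark about $*$-invariance is well-taken: since $h(ua,v)=a^*h(u,v)$, perturbing the first argument by $V\i$ only changes form values by elements of $\i^*$, so both the chaining of Lemmas~\ref{uno}--\ref{dos} and the claim that $(z'_1,w_j),(z'_1,z_j)\in\i$ genuinely require $\i^*\subseteq\i$ --- a hypothesis the paper uses tacitly and which holds in the only application (Theorem~\ref{sur}), but which is not stated in the lemma itself.
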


\begin{proof} By induction on $m$. Successively applying Lemmas \ref{uno}, \ref{tres} and \ref{dos} we obtain
$w'_1,z'_1\in V$ such that
$$
w_1\equiv w'_1 \mod V\i,\quad z_1\equiv z'_1 \mod V\i,\quad (w'_1,w'_1)=0=(z'_1,z'_1),\quad (w'_1,z'_1)=1.
$$
If $m=1$ we are done. Suppose $m>1$ and the result is true for smaller ranks. Applying Corollary \ref{bas}, we see that
$w'_1,z'_1,w_2\dots,w_m,z_2,\dots,z_m$ is a basis of $V$. Applying the procedure of Lemma \ref{exte} we obtain a basis
$w'_1,z'_1,w^0_2\dots,w^0_m,z^0_2,\dots,z^0_m$ of $V$ such that $w'_1,z'_1$ are orthogonal to all other vectors in this list.
Since $(x,y)\in\i$ when $x\in\{w'_1,z'_1\}$ and $y\in\{w_2\dots,w_m,z_2,\dots,z_m\}$, the proof of Lemma \ref{exte} shows that
$$
w^0_i\equiv w_i\mod V\i,\quad z^0_i\equiv z_i\mod V\i,\quad 2\leq i\leq m.
$$
Therefore
$$
(w^0_i,z^0_j)\equiv\delta_{ij}\mod\i,\; 2\leq i\leq m,\quad (w^0_i,w^0_j)\equiv 0\equiv (z^0_i,z^0_j)\mod\i,\; 2\leq i,j\leq m.
$$
By Lemma \ref{deco}, the restriction of $h$ to the span, say $W$, of $w^0_2\dots,w^0_m,z^0_2,\dots,z^0_m$ is non-degenerate.
By induction hypothesis, there is a symplectic basis $w'_2,\dots,w'_m,z'_2,\dots,z'_m$ of $W$ such that
$$
w'_i\equiv w^0_i\mod V\i, \quad z'_i\equiv z^0_i\mod V\i,\; 2\leq i\leq m.
$$
Then $w'_1,\dots,w'_m,z'_1,\dots,z'_m$ is a basis of $V$ satisfying all our requirements.
\end{proof}

Let $\i$ be a $*$-invariant ideal of $A$. Then $\overline{A}=A/\i$ inherits an involution, also denoted by $*$, from $A$ by declaring
$(a+\i)^*=a^*+\i$. This is well-defined, since $\i$ is $*$-invariant. Set $\overline{V}=V/V\i$ and consider the skew-hermitian
form $\overline{h}:\overline{V}\times \overline{V}\to \overline{A}$, given by $\overline{h}(u+V\i,v+V\i)=h(u,v)+\i$. We see that
$\overline{h}$ is well-defined and non-degenerate. We then have a group homomorphism $U(V,h)\to U(\overline{V},\overline{h})$, given by $g\mapsto \overline{g}$, where $\overline{g}(u+V\i)=g(u)+V\i$.

\begin{theorem}\label{sur} Let $\i$ be a proper $*$-invariant ideal of $A$. Then the canonical group homomorphism $U(V,h)\to U(\overline{V},\overline{h})$ is surjective.
\end{theorem}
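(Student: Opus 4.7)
The plan is to lift $\overline{g}\in U(\overline{V},\overline{h})$ by transporting symplectic bases. Since every unitary map is determined by its action on a symplectic basis, and conversely any bijection between symplectic bases extends to a unitary map, it suffices to produce a symplectic basis of $V$ whose reduction modulo $V\i$ is the $\overline{g}$-image of the reduction of some chosen symplectic basis of $V$.

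First I would fix a symplectic basis $u_1,\dots,u_m,v_1,\dots,v_m$ of $V$, whose existence is guaranteed by Proposition \ref{sp}. Reducing modulo $V\i$, the vectors $\overline{u}_1,\dots,\overline{u}_m,\overline{v}_1,\dots,\overline{v}_m$ form a symplectic basis of $(\overline{V},\overline{h})$. Applying $\overline{g}$, the vectors $\overline{g}(\overline{u}_i),\overline{g}(\overline{v}_j)$ are again a symplectic basis of $\overline{V}$. Now choose, for each $i$, arbitrary lifts $w_i,z_i\in V$ of $\overline{g}(\overline{u}_i),\overline{g}(\overline{v}_i)$ (this requires no compatibility at all).

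Because reduction modulo $\i$ takes $h$ to $\overline{h}$ and the images under $\overline{g}$ form a symplectic basis of $\overline{V}$, these lifts satisfy
$$
(w_i,z_j)\equiv\delta_{ij}\mod\i,\qquad (w_i,w_j)\equiv 0\equiv(z_i,z_j)\mod\i,
$$
which are exactly the hypotheses of Lemma \ref{util}. That lemma then furnishes a genuine symplectic basis $w'_1,\dots,w'_m,z'_1,\dots,z'_m$ of $V$ with $w'_i\equiv w_i$ and $z'_i\equiv z_i$ modulo $V\i$.

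Finally I would define $g\in\GL(V)$ by $g(u_i)=w'_i$ and $g(v_i)=z'_i$. Since both the domain basis and the codomain basis are symplectic, $g$ preserves $h$ on a spanning set and therefore lies in $U(V,h)$. Reducing modulo $V\i$, $\overline{g}$ sends $\overline{u}_i\mapsto \overline{w}'_i=\overline{g}(\overline{u}_i)$ and similarly for the $\overline{v}_i$, so $\overline{g}$ and $\overline{g}$ agree on a basis of $\overline{V}$, and hence coincide as elements of $U(\overline{V},\overline{h})$. The only real work has already been absorbed into Lemma \ref{util}, which itself was the delicate step since, in the absence of an orthogonal basis, one cannot lift symplectic bases componentwise and must instead correct lifts in cascade using Lemmas \ref{uno}, \ref{tres}, and \ref{dos}.
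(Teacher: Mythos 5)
Your proposal is correct and follows essentially the same route as the paper: lift the images under the given map of a fixed symplectic basis, observe that the lifts satisfy the congruence hypotheses of Lemma \ref{util}, correct them to a genuine symplectic basis, and define the lift $g\in U(V,h)$ by sending one symplectic basis to the other. The only (cosmetic) issue is the notational clash of using $\overline{g}$ both for the given element of $U(\overline{V},\overline{h})$ and for the reduction of your constructed $g$; the paper avoids this by calling the given element $f$.
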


\begin{proof} Let $f\in U(\overline{V},\overline{h})$. By Proposition \ref{sp}, $V$ has a symplectic basis $u_1,\dots,u_m,v_1,\dots,v_m$.
We have $f(u_i+V\i)=w_i+V\i$ and $f(v_i+V\i)=z_i+V\i$ for some $w_i,z_i\in V$ and $1\leq i\leq m$.
Since $f$ preserves $\overline{h}$, we must have
$$
(w_i,z_j)\equiv\delta_{ij}\mod\i,\; 1\leq i\leq m,\quad (w_i,w_j)\equiv 0\equiv (z_i,z_j)\mod\i,\; 1\leq i,j\leq m.
$$
By Lemma \ref{util} there is a symplectic basis  $w'_1,\dots,w'_m,z'_1,\dots,z'_m$ of $V$ such that
$$
w_i\equiv w'_i\mod V\i,\quad  z_i\equiv z'_i\mod V\i,\; 1\leq i\leq m.
$$
Let $g\in U(V,h)$ map $u_1,\dots,u_m,v_1,\dots,v_m$ into $w'_1,\dots,w'_m,z'_1,\dots,z'_m$. Then $\overline{g}=f$.
\end{proof}

\section{Computing the order of the $U_{2m}(A)$ by successive reductions}\label{s4}

We refer to an element $a$ of $A$ as hermitian (resp. skew-hermitian) if $a=a^*$ (resp. $a=-a^*$).
Let $R$ (resp. $S$) be the subgroup of the additive group of $A$
of all hermitian (resp. skew-hermitian) elements. We know by (A3) that
\begin{equation}
\label{sr}
S\subseteq \r.
\end{equation}
Moreover, it follows from (A2) that
\begin{equation}
\label{ars}
A=R\oplus S.
\end{equation}
Letting $$\m=R\cap\r,$$
we have a group imbedding $R/\m\hookrightarrow A/\r$. In fact, we deduce from (\ref{sr}) and (\ref{ars}) that
\begin{equation}
\label{ars2}
A/\r\cong R/\m.
\end{equation}

\begin{lem}
\label{cuad}
 Suppose $\i$ is a $*$-invariant ideal of $A$ satisfying $\i^2=0$. Let $\{v_1,\dots,v_n\}$ be a basis of $V$
and let $J$ be the Gram matrix of $v_1,\dots,v_n$. Then, relative to $\{v_1,\dots,v_n\}$, the
kernel of the canonical epimorphism $U(V,h)\to U(\overline{V},\overline{h})$ consists
of all matrices $1+M$, such that $M\in M_n(\i)$ and
\begin{equation}
\label{anr}
M^*J+JM=0.
\end{equation}
\end{lem}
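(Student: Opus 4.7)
The plan is to work entirely in matrix form relative to the given basis, using $\i^2 = 0$ to linearize the unitary condition.

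First, I would translate both the kernel condition and the unitary condition into matrix statements. An element $g \in U(V,h)$ lies in the kernel of $U(V,h) \to U(\overline{V},\overline{h})$ precisely when $g(v_j) \equiv v_j \pmod{V\i}$ for each $j$, which means $g(v_j) = v_j + \sum_i v_i m_{ij}$ with $m_{ij}\in\i$; equivalently, the matrix of $g$ has the form $X = 1 + M$ with $M \in M_n(\i)$. The unitary condition $h(gu,gv) = h(u,v)$ is equivalent (as explained in the introduction) to $X^*JX = J$.

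Next I would verify that $1 + M$ is automatically invertible, so no extra condition is needed. Since $\i^2 = 0$ and $1 \neq 0$ in $A$, the ideal $\i$ must be proper; by assumption (A1) this forces $\i \subseteq \r$, and hence $M \in M_n(\r)$. Using the fact (cited in Corollary \ref{bas} via \cite[Theorem 1.2.6]{H}) that $M_n(\r)$ is the Jacobson radical of $M_n(A)$, we conclude $1 + M \in \GL_n(A)$.

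Then I would expand the unitary identity:
\begin{equation*}
(1+M)^*J(1+M) = J + M^*J + JM + M^*JM.
\end{equation*}
Since each entry of $M^*JM$ lies in $\i \cdot A \cdot \i \subseteq \i^2 = 0$, that term vanishes. So $X^*JX = J$ is equivalent to $M^*J + JM = 0$, which is (\ref{anr}). This computation is reversible, so conversely, any $M \in M_n(\i)$ satisfying (\ref{anr}) gives an invertible matrix $1 + M$ satisfying $X^*JX = J$, hence an element of $U(V,h)$ that reduces to the identity modulo $\i$. Both inclusions are established.

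There is no real obstacle here; the only point that requires a moment's thought is the automatic invertibility of $1 + M$, which is why the lemma does not need to state it as a hypothesis. The essential role of $\i^2 = 0$ is only to kill the quadratic cross term $M^*JM$.
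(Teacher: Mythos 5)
Your proposal is correct and follows essentially the same route as the paper: identify kernel elements with matrices $1+M$, $M\in M_n(\i)$, expand $(1+M)^*J(1+M)=J$, and use $\i^2=0$ to kill the cross term $M^*JM$. You spell out the automatic invertibility of $1+M$ (which the paper leaves implicit); this is a welcome clarification but not a different argument.
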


\begin{proof} By definition the kernel of $U(V,h)\to U(\overline{V},\overline{h})$ consists of all matrices of the form $1+M$,
where $M\in M_n(\i)$ and
$$
(1+M)^* J(1+M)=J.
$$
Expanding this equation and using $\i^2=0$ yields (\ref{anr}).
\end{proof}

Let $\{u_1,\dots,u_m,v_1,\dots,v_m\}$ be a symplectic basis of $V$. We write $U_{2m}(A)$ for the image of $U(V,h)$ under the group isomorphism $GL(V)\to\GL_{2m}(A)$ relative to $\{u_1,\dots,u_m,v_1,\dots,v_m\}$.

We make the following assumption on $A$ for the remainder of the paper:

\medskip

(A6) $A$ is a finite ring.

\medskip

We deduce from (\ref{ars}) that
$$|A|=|R||S|.$$
On the other hand, it follows from (A1), (A2) and (A6) that $F_q=A/\r$ is a finite field of odd characteristic. By (A3), $a+\r=a^* +\r$ for all $a\in A$, so the involution that $*$ induces
on $F_q$ is the identity. Taking $\i=\r$ in Theorem \ref{sur}, we have ${U}_{2m}(\overline{A})=\Sp_{2m}(q)$,
the symplectic group of rank $2m$ over $F_q$. Recall \cite[Chapter 8]{T}
that
$$
|\Sp_{2m}(q)|=(q^{2m}-1)q^{2m-1}(q^{2(m-1)}-1)q^{2m-3}\cdots (q^2-1)q=q^{m^2}(q^{2m}-1)(q^{2(m-1)}-1)\cdots (q^{2}-1).
$$

\begin{cor}
\label{cuad2} Suppose $\i$ is a $*$-invariant ideal of $A$ satisfying $\i^2=0$. Then the kernel of
$U(V,h)\to U(\overline{V},\overline{h})$ has order $|\i|^{2m^2-m}|\i\cap \m|^{2m}$.
\end{cor}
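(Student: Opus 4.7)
The plan is to apply Lemma~\ref{cuad} directly: fixing the symplectic basis $\{u_1,\dots,u_m,v_1,\dots,v_m\}$, the Gram matrix is $J=\bigl(\begin{smallmatrix}0 & 1\\ -1 & 0\end{smallmatrix}\bigr)$ (with $m\times m$ blocks), so the kernel is in bijection with the set of $M\in M_{2m}(\i)$ satisfying $M^{*}J+JM=0$. The whole task reduces to counting such $M$. Note also that since $\i$ is a proper ideal in the local ring $A$, we have $\i\subseteq\r$ by (A1), and hence $R\cap\i\subseteq R\cap\r=\m$, so that $R\cap\i=\m\cap\i$.

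I would write $M$ in block form as
$$
M=\begin{pmatrix} X & Y\\ Z & W\end{pmatrix},\qquad X,Y,Z,W\in M_m(\i),
$$
and compute $M^{*}J$ and $JM$ in block form. A short calculation gives
$$
M^{*}J+JM=\begin{pmatrix} Z-Z^{*} & X^{*}+W\\ -(X+W^{*}) & Y^{*}-Y\end{pmatrix},
$$
so the condition $M^{*}J+JM=0$ is equivalent to $W=-X^{*}$, $Y=Y^{*}$, $Z=Z^{*}$. Consequently $X$ may be chosen arbitrarily in $M_m(\i)$ (contributing $|\i|^{m^2}$), $W$ is then forced, and $Y,Z$ must be hermitian matrices over $\i$.

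The remaining step is to count hermitian matrices in $M_m(\i)$. For such a matrix $H$, the diagonal entries $H_{ii}$ must lie in $R\cap\i=\m\cap\i$, contributing $|\m\cap\i|^{m}$; the off-diagonal pairs $(H_{ij},H_{ji})$ with $i<j$ are parametrised by the free choice of $H_{ij}\in\i$ (since $H_{ji}=H_{ij}^{*}$ is then determined), contributing $|\i|^{m(m-1)/2}$. Thus each of $Y,Z$ admits $|\m\cap\i|^{m}\,|\i|^{m(m-1)/2}$ choices, and multiplying everything together yields
$$
|\i|^{m^2}\cdot\bigl(|\m\cap\i|^{m}|\i|^{m(m-1)/2}\bigr)^{2}=|\i|^{m^{2}+m(m-1)}|\m\cap\i|^{2m}=|\i|^{2m^{2}-m}|\i\cap\m|^{2m},
$$
as required.

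The routine block-matrix calculation of $M^{*}J+JM$ is the only potentially error-prone step; once the three constraints $W=-X^{*}$, $Y^{*}=Y$, $Z^{*}=Z$ are isolated, the counting is purely combinatorial and the key identity $R\cap\i=\m\cap\i$ (which relies on (A1) and the properness of $\i$) produces exactly the factor $|\i\cap\m|^{2m}$ appearing in the claimed formula.
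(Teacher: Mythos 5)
Your proposal is correct and follows essentially the same route as the paper: apply Lemma~\ref{cuad} with the symplectic basis, reduce $M^*J+JM=0$ to the block conditions $W=-X^*$, $Y=Y^*$, $Z=Z^*$, and count (the paper leaves the final count implicit, while you spell it out, including the observation $R\cap\i=\m\cap\i$, which is exactly the point needed).
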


\begin{proof} Let $\{u_1,\dots,u_m,v_1,\dots,v_m\}$ be a symplectic basis of $V$. Thus, the Gram matrix, say $J\in M_{2m}(A)$, of
$u_1,\dots,u_m,v_1,\dots,v_m$ is
$$
J=\left(
    \begin{array}{cc}
      0 & 1 \\
      -1 & 0 \\
    \end{array}
  \right),
$$
where all blocks are in $M_{m}(A)$. According to Lemma \ref{cuad}, the kernel of $U(V,h)\to U(\overline{V},\overline{h})$ consists of all $1+M$, where
$$
M=\left(
    \begin{array}{cc}
      P & Q \\
      T & S \\
    \end{array}
  \right),
$$
and $S=-P^*$, $Q=Q^*$ and $T=T^*$, which yields the desired result.
\end{proof}

\begin{thm}
\label{zxz} Let $A$ be a finite local ring, not necessarily commutative, with Jacobson radical~$\r$ and residue field $F_q$ of odd characteristic. Suppose $A$ has
an involution $*$ such that $a-a^*\in\r$ for all $a\in A$. Let $\m$ be the group of all $a\in\r$ such that
$a=a^*$. Then
$$
|U_{2m}(A)|=|\r|^{2m^2-m}|\m|^{2m} q^{m^2}(q^{2m}-1)(q^{2(m-1)}-1)\cdots (q^2-1).
$$
\end{thm}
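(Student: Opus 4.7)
The plan is to filter $A$ by powers of its Jacobson radical and reduce step by step, exploiting the fact that each successive quotient is a square-zero ideal to which Corollary \ref{cuad2} applies, while Theorem \ref{sur} delivers surjectivity of the reduction maps.

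Concretely, I would set $A_i = A/\r^i$ for $1 \leq i \leq e$, so $A_e = A$ and $A_1 = F_q$. Each $\r^i$ is $*$-invariant, so each $A_i$ inherits an involution and a non-degenerate skew-hermitian form on $V/V\r^i$. The kernel of $A_{i+1} \to A_i$ is $\r^i/\r^{i+1}$, an ideal of square zero in $A_{i+1}$. Theorem \ref{sur} gives surjectivity of $U_{2m}(A_{i+1}) \to U_{2m}(A_i)$, so by induction $U_{2m}(A) \to U_{2m}(F_q)$ is surjective with fibre sizes controlled by the successive kernels. For the base case, the involution induced on $F_q$ is trivial (as $a - a^* \in \r$), so a non-degenerate skew-hermitian form over $F_q$ is just a non-degenerate alternating form, and $U_{2m}(F_q) = \Sp_{2m}(q)$, whose order is recalled in the excerpt.

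Applying Corollary \ref{cuad2} to the ideal $\r^i/\r^{i+1}$ of $A_{i+1}$, the kernel $K_i$ of $U_{2m}(A_{i+1}) \to U_{2m}(A_i)$ has order
$$|K_i| = |\r^i/\r^{i+1}|^{2m^2-m}\,|(\r^i/\r^{i+1}) \cap \m_{A_{i+1}}|^{2m}.$$
Here I would use that every $*$-invariant ideal $\i$ of $A$ decomposes as $\i = (\i \cap R) \oplus (\i \cap S)$ via the idempotents $a \mapsto (a \pm a^*)/2$, which exist by (A2). Applied to $\r^i$ and $\r^{i+1}$, this shows that the hermitian part of $\r^i/\r^{i+1}$ is exactly $(R \cap \r^i)/(R \cap \r^{i+1})$, whose order is $|R \cap \r^i|/|R \cap \r^{i+1}|$. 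Similarly $|\r^i/\r^{i+1}| = |\r^i|/|\r^{i+1}|$.

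Finally I would multiply the kernel orders telescopically: writing $\m_i = R \cap \r^i$, so $\m_1 = \m$ and $\m_e = 0$, the product over $i = 1, \dots, e-1$ collapses to
$$\prod_{i=1}^{e-1}|K_i| = \left(\frac{|\r|}{|\r^e|}\right)^{2m^2-m}\left(\frac{|\m_1|}{|\m_e|}\right)^{2m} = |\r|^{2m^2-m}\,|\m|^{2m},$$
and multiplying by $|\Sp_{2m}(q)| = q^{m^2}(q^{2m}-1)\cdots(q^2-1)$ yields the stated formula. The only mildly delicate step is verifying that the hermitian part of the square-zero ideal $\r^i/\r^{i+1}$ inside $A_{i+1}$ is exactly the image of $R \cap \r^i$; I expect this to follow cleanly from the averaging argument above once one checks that a class $a + \r^{i+1} \in \r^i/\r^{i+1}$ is fixed by $*$ iff $a - a^* \in \r^{i+1}$, and then uses that $(a-a^*)/2 \in S \cap \r^i$ lies in $\r^{i+1}$ iff its hermitian counterpart captures the whole class.
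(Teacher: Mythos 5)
Your proof is correct and takes essentially the same route as the paper: filter by powers of $\r$, apply Theorem \ref{sur} for surjectivity and Corollary \ref{cuad2} for each square-zero kernel, identify the hermitian part of $\r^i/\r^{i+1}$ with $(R\cap\r^i)/(R\cap\r^{i+1})$ via the averaging decomposition $a=(a+a^*)/2+(a-a^*)/2$, and telescope. The paper's proof does exactly this, only indexed in the opposite direction and phrased with $(\m\cap\r^{k-1}+\r^k)/\r^k$ instead of your quotient description, which are isomorphic by the second isomorphism theorem.
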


\begin{proof} Consider the rings
$$
A=A/\r^{e}, A/\r^{e-1},\dots,A/\r^2,A/\r.
$$
Each of them is a factor of $A$, so is local and inherits an
involution from $*$. Each successive pair is of the form
$C=A/\r^k,D=A/\r^{k-1}$,  where the kernel of the canonical
epimorphism $C\to D$ is $\j=\r^{k-1}/\r^k$, so that $\j^2=0$. We
may thus apply Theorem \ref{sur} and Corollary \ref{cuad2} $e-1$
times to obtain the desired result, as follows. We have
$$
|\r|=|\r^{e-1}/\r^e|\cdots |\r/\r^2|
$$
and
$$
|\m|=|\m\cap\r^{e-1}/\m\cap \r^e|\cdots |\m\cap\r^{k-1}/\m\cap \r^{k}|\cdots |\m\cap \r/\m\cap \r^2|,
$$
where the group of hermitian elements in the kernel of $C\to D$
has $|\m\cap\r^{k-1}/\m\cap \r^{k}|$ elements. Indeed, these
elements are those $a+\r^k$ such that $a\in \r^{k-1}$ and
$a-a^*\in \r^k$. But $a+a^*$ is hermitian, so
$a+a^*\in\m\cap\r^{k-1}$. Thus $$a=(a-a^*)/2+(a+a^*)/2\in
\r^k+\m\cap\r^{k-1}.$$ Hence the group of hermitian elements in
the kernel of $C\to D$ is
$$
(\m\cap\r^{k-1}+\r^k)/\r^k\cong \m\cap\r^{k-1}/(\m\cap\r^{k-1}\cap \r^k)\cong \m\cap\r^{k-1}/\m\cap\r^{k}.
$$
\end{proof}

\begin{thm}  Given a $*$-invariant proper ideal $\i$ of $A$, the kernel of
$U(V,h)\to U(\overline{V},\overline{h})$ has order $|\i|^{2m^2-m}|\i\cap\m|^{2m}$.
\end{thm}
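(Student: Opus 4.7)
The plan is to mimic the proof of Theorem \ref{zxz}, but filtering by powers of $\i$ rather than by powers of $\r$. Since $\i$ is a proper ideal of the local ring $A$ it is contained in $\r$, hence nilpotent by (A4); let $f$ denote its nilpotency degree. Since $\i$ is $*$-invariant, so is each power $\i^k$, and therefore each quotient $C_k = A/\i^{k+1}$ inherits an involution which still satisfies (A1)--(A4), (A6). The ideal $\j_k = \i^k/\i^{k+1} \subseteq C_k$ is $*$-invariant and satisfies $\j_k^2 = 0$.

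I would then consider the chain
$$
A = A/\i^f \twoheadrightarrow A/\i^{f-1}\twoheadrightarrow\cdots\twoheadrightarrow A/\i^2\twoheadrightarrow A/\i.
$$
By Theorem \ref{sur} each successive homomorphism $U_{2m}(C_k)\to U_{2m}(C_{k-1})$ is surjective, and by Corollary \ref{cuad2} applied inside $C_k$, its kernel has order
$$
|\j_k|^{2m^2-m}\,|\j_k\cap \m_{C_k}|^{2m},
$$
where $\m_{C_k}$ is the subgroup of hermitian elements of the Jacobson radical of $C_k$. The kernel of $U_{2m}(A)\to U_{2m}(A/\i)$ is the composition of $f-1$ such extensions, so its order is the product of these quantities for $k=1,\dots,f-1$.

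The product of the $|\j_k|^{2m^2-m}$ factors telescopes to $|\i|^{2m^2-m}$ since $\i^f=0$, so the first factor is immediate. For the second factor, the key identification, exactly as in the proof of Theorem \ref{zxz}, is
$$
\j_k\cap \m_{C_k} \;\cong\; \bigl(\m\cap \i^k\bigr)\big/\bigl(\m\cap \i^{k+1}\bigr).
$$
To justify this I would note that an element $a+\i^{k+1}$ lies in $\j_k\cap \m_{C_k}$ iff $a\in\i^k$ and $a-a^*\in\i^{k+1}$; writing $a=(a-a^*)/2+(a+a^*)/2$ and using (A2) together with the $*$-invariance of $\i^{k+1}$ exhibits $a$ modulo $\i^{k+1}$ as $(a+a^*)/2$, which is hermitian and lies in $\i^k$. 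The surjection $\m\cap\i^k\to \j_k\cap\m_{C_k}$ so obtained has kernel exactly $\m\cap\i^{k+1}$, giving the stated isomorphism. Raising to the $2m$-th power and telescoping over $k=1,\dots,f-1$ (using $\m\cap\i^f=0$) yields the factor $|\m\cap\i|^{2m}$, and combining the two factors gives the claimed order $|\i|^{2m^2-m}|\i\cap\m|^{2m}$.

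The main obstacle is the hermitian bookkeeping in the identification $\j_k\cap \m_{C_k} \cong (\m\cap\i^k)/(\m\cap\i^{k+1})$; however this is precisely the calculation carried out for $\i=\r$ in Theorem \ref{zxz}, and it goes through verbatim because the argument uses only that $\i$ is $*$-invariant, that $2\in U(A)$, and that $a-a^*\in\r$ (hence the full decomposition $a=(a-a^*)/2+(a+a^*)/2$ is available). The rest of the proof is a routine telescoping.
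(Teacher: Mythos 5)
Your proof is correct, but it takes a different (and somewhat longer) route than the paper. The paper deduces the theorem in one line from results it already has: by Theorem \ref{sur} the reduction map is surjective, so the kernel has order $|U(V,h)|/|U(\overline{V},\overline{h})|$, and applying the order formula of Theorem \ref{zxz} to both $A$ and $A/\i$ gives
$$
\frac{|\r|^{2m^2-m}\,|\m|^{2m}}{|\r/\i|^{2m^2-m}\,|(\m+\i)/\i|^{2m}}=|\i|^{2m^2-m}\,|\i\cap\m|^{2m},
$$
where the identification of the hermitian part of the radical of $A/\i$ with $(\m+\i)/\i$ is the same ``$a=(a-a^*)/2+(a+a^*)/2$'' bookkeeping you perform. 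You instead redo the mechanism of the proof of Theorem \ref{zxz}, filtering by powers of $\i$ rather than of $\r$: each layer $\i^k/\i^{k+1}$ squares to zero, Corollary \ref{cuad2} gives the order of each layer's kernel, Theorem \ref{sur} at every stage guarantees that the kernel of the composite has order equal to the product of the layer kernels, and the telescoping together with $\j_k\cap\m_{C_k}\cong(\m\cap\i^k)/(\m\cap\i^{k+1})$ yields the formula. All the steps check out (including the passage of (A1)--(A4), (A6) to the quotients $A/\i^{k+1}$, which you need and which follows as in the paper's $\r$-adic chain). What your approach buys is independence from the global order formula: it computes the kernel directly and would recover Theorem \ref{zxz} itself as the special case $\i=\r$ combined with $|\Sp_{2m}(q)|$; what the paper's approach buys is brevity, since once Theorems \ref{sur} and \ref{zxz} are in hand the result is an immediate quotient of orders.
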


\begin{proof} By Theorems \ref{sur} and \ref{zxz}, the alluded kernel has order
$$
\frac{|U(V,h)|}{|U(\overline{V},\overline{h})|}=\frac{|\r|^{2m^2-m}|\m|^{2m}}{|\r/\i|^{2m^2-m}|(\m+\i)/\i|^{2m}}=|\i|^{2m^2-m}|\i\cap\m|^{2m}.
$$
\end{proof}
\section{Computing the order of $U_{2m}(A)$ by counting symplectic pairs}\label{s5}

The following easy observation will prove useful. Given $s\in S$ and $y\in A$, we have
\begin{equation}
\label{yx}
y-y^*=s\text{ if and only if }y\in s/2+R.
\end{equation}

By the length of a vector $v\in V$ we understand the element $(v,v)\in S$. Given $s\in S$, the number of basis vectors
of $V$ of length $s$ will be denoted by $N(m,s)$.

\begin{lemma}\label{numbers} Given $s\in S$, we have
$$
N(1,s)=(|A|-|\r|)(|R|+|\m|)=(|A|^2-|\r|^2)/|S|,\quad s\in S.
$$
In particular, $N(1,s)$ is independent of $s$.
\end{lemma}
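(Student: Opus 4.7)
My plan is to work concretely in a symplectic basis $\{x,y\}$ of the rank $2$ module $V$, writing every vector uniquely as $v=xa+yb$. From the proof of Lemma \ref{basis} (applied with a nonzero annihilator $r\in\r^{e-1}$), $v$ is a basis vector precisely when at least one of $a,b$ lies in $U(A)$; equivalently, the non-basis vectors are exactly those $v$ with $a,b\in\r$, of which there are $|\r|^2$. Expanding $(v,v)$ using $(x,x)=(y,y)=0$, $(x,y)=1$ gives the length formula
\[
(v,v)=a^*b-b^*a.
\]

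Setting $c=a^*b$, the length of $v$ is $c-c^*$, so by the criterion (\ref{yx}), $(v,v)=s$ iff $c\in s/2+R$. I would then count basis vectors of length $s$ by splitting on whether $a$ is a unit:

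\textbf{Case A} ($a\in U(A)$, $b$ arbitrary). Here $a^*\in U(A)$, so $b\mapsto a^*b$ is a bijection $A\to A$. Thus for each of the $|A|-|\r|$ choices of $a$ there are exactly $|R|$ compatible $b$'s, one for each $c\in s/2+R$.

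\textbf{Case B} ($a\in\r$, $b\in U(A)$). Now $c=a^*b\in\r$, so we need $c\in(s/2+R)\cap\r$. Since $s\in S\subseteq\r$ by (\ref{sr}), we have $s/2\in\r$, and this intersection equals $s/2+(R\cap\r)=s/2+\m$, of size $|\m|$. For each $b\in U(A)$ and each such $c$, the element $a=(b^*)^{-1}c^*$ is uniquely determined and automatically in $\r$, giving $(|A|-|\r|)\,|\m|$ pairs.

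The two cases are disjoint, so adding them yields $N(1,s)=(|A|-|\r|)(|R|+|\m|)$, which is already independent of $s$. For the alternative form, I would use (\ref{ars}) to get $|A|=|R|\,|S|$, and then observe that $\r=\m\oplus S$ (any $a\in\r$ splits as $(a+a^*)/2+(a-a^*)/2\in R\oplus S$, whose $S$-component is already in $\r$, forcing the $R$-component into $\r\cap R=\m$), giving $|\r|=|\m|\,|S|$. Hence $|R|+|\m|=(|A|+|\r|)/|S|$, and multiplying by $|A|-|\r|$ yields the second expression $(|A|^2-|\r|^2)/|S|$. The argument is essentially bookkeeping once the length formula $(v,v)=a^*b-b^*a$ and the characterization (\ref{yx}) are in hand; the only point requiring a brief verification is the decomposition $\r=\m\oplus S$ used in the final rewriting.
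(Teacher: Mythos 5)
Your proposal is correct and follows essentially the same argument as the paper: the same length formula $a^*b-b^*a$ in a symplectic basis, the same case split on whether $a$ is a unit, and the same use of (\ref{yx}) to get the counts $(|A|-|\r|)|R|$ and $(|A|-|\r|)|\m|$. The only (cosmetic) divergence is at the final equality, where you derive $(|A|^2-|\r|^2)/|S|$ from $|A|=|R||S|$ and $|\r|=|\m||S|$ (via $\r=\m\oplus S$) rather than from the independence of $N(1,s)$ in $s$; the paper itself records this alternative in the note following the lemma.
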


\begin{proof} Let $u,v$ be a symplectic basis of $V$. Given $(a,b)\in A^2$, the length of $w=ua+vb$ is
$$
(w,w)=a^* b-b^* a.
$$
Thus, we need to count the number of pairs $(a,b)\in A^2\setminus \r^2$ such that
\begin{equation}
\label{lens}
a^* b-b^* a=s.
\end{equation}
For this purpose, suppose first $a\in U(A)$. Setting $y=a^* b$ and using (\ref{yx}), we see that (\ref{lens}) holds
if and only if $b\in (a^*)^{-1}(s/2+R)$. Thus, the number of solutions $(a,b)\in A^2$ to (\ref{lens}) such that $a\in U(A)$ is
$(|A|-|\r|)|R|$. Suppose next that $a\notin U(A).$ Then $b \in U(A)$.
Rewriting (\ref{lens}) in the form
\begin{equation}
\label{lens2}
b^* a-a^* b=-s
\end{equation}
and setting $y=b^* a$, we see as above that (\ref{lens2}) holds if and only if $a\in (b^*)^{-1}(-s/2+R)$. Recalling that $a\in\r$, we
are thus led to calculating
$$
|[(b^*)^{-1}(-s/2+R)]\cap\r|=|(-s/2+R)\cap b^*\r|=|(-s/2+R)\cap \r|=|R\cap\r|,
$$
the last two equalities holding because $b\in U(R)$ and $s\in\r$. Recalling that $\m=R\cap\r$, it follows that
$N(1,s)=(|A|-|\r|)(|R|+|\m|)$. Since this is independent of $s$, we infer $N(1,s)=(|A|^2-|\r|^2)/|S|$.
\end{proof}

Note that the identity $(|A|-|\r|)(|R|+|\m|)=(|A|^2-|\r|^2)/|S|$ also follows from $|R|=|A|/|S|$ and $|\m|=|\r|/|S|$.

\begin{prop}\label{igual} Given $s\in S$, we have
$$
N(m,s)=(|A|^{2m}-|\r|^{2m})/|S|,\quad s\in S.
$$
In particular, $N(m,s)$ is independent of $s$.
\end{prop}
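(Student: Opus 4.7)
I would proceed by induction on $m$, with the base case $m=1$ being Lemma \ref{numbers}. For the inductive step, I would use Proposition \ref{sp} to pick a symplectic pair $u_1,v_1\in V$ and decompose $V=U\perp W$ with $U=\mathrm{span}(u_1,v_1)$ and $W=U^\perp$ of rank $2(m-1)$. Every $v\in V$ then decomposes uniquely as $v=x+y$ with $x\in U$, $y\in W$, and orthogonality yields $(v,v)=(x,x)+(y,y)$.

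The key structural point I would establish first is that $v$ is a basis vector of $V$ if and only if $x$ is a basis vector of $U$ \emph{or} $y$ is a basis vector of $W$. Since $A$ is local, a vector in a free $A$-module is a basis vector precisely when at least one of its coordinates is a unit, equivalently, when it does not lie in the submodule obtained by multiplication by $\r$. Because $V\r=U\r\oplus W\r$, the condition $v\notin V\r$ translates into $x\notin U\r$ or $y\notin W\r$. I expect this disjunction to be the only subtle point; once it is in place, the rest is combinatorial bookkeeping.

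With this disjunction in hand, inclusion-exclusion gives
$$
N(m,s)=\sum_{t\in S}\bigl[N(1,t)\,T_W(s-t)+T_U(t)\,N(m-1,s-t)-N(1,t)\,N(m-1,s-t)\bigr],
$$
where $T_U(t)$ and $T_W(t)$ denote the total number of vectors of length $t$ in $U$ and $W$ respectively. By Lemma \ref{numbers}, $N(1,t)$ is the constant $(|A|^2-|\r|^2)/|S|$; the inductive hypothesis makes $N(m-1,\cdot)$ equal to the constant $(|A|^{2(m-1)}-|\r|^{2(m-1)})/|S|$. Pulling these constants out of the sum and using $\sum_{t\in S}T_U(t)=|U|=|A|^2$ and $\sum_{t\in S}T_W(t)=|W|=|A|^{2(m-1)}$ reduces the right side to a closed expression in $|A|$, $|\r|$, $|S|$, manifestly independent of $s$. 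A short expansion, resting on the identity $(a-b)c+(c-d)a-(a-b)(c-d)=ac-bd$ applied to $a=|A|^2$, $b=|\r|^2$, $c=|A|^{2(m-1)}$, $d=|\r|^{2(m-1)}$, then delivers $N(m,s)=(|A|^{2m}-|\r|^{2m})/|S|$, as claimed.
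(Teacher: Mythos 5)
Your proof is correct and follows essentially the same route as the paper: induction on $m$ via an orthogonal decomposition $V=U\perp W$ with $U$ of rank $2$, resting on Lemma \ref{numbers} together with the observation that $v=x+y$ is a basis vector precisely when $x\notin U\r$ or $y\notin W\r$. The only difference is bookkeeping: the paper splits into the mutually exclusive cases ($x$ a basis vector of $U$ with $y$ arbitrary) versus ($x\in U\r$ with $y$ a basis vector of $W$), obtaining $N(m,s)=N(1,0)\,|A|^{2(m-1)}+|\r|^{2}\,N(m-1,0)$ directly, whereas you reach the same closed form by inclusion--exclusion.
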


\begin{proof} The first assertion follows from the second. We prove the latter by induction on $m$. The case $m=1$ is done in Lemma
    \ref{numbers}. Suppose that $m>1$ and $N(m-1,s)$ is independent of $s$. Set $N=N(1,0)$ and $M=N(m-1,0)$. Decompose $V$ as $U\perp W$ where $U$ has rank 2. Thus $N(m,s)$ is the number of pairs $(u,w) \in U \times W$
such that either $w$ is an arbitrary element of $W$ and $u$ is a basis vector of $U$ of length $s-(w,w)$, or,
$u$ is an arbitrary non basis vector of $U$ and $w$ is a basis vector of $W$ of length $s - (u,u)$. These two
possibilities are mutually exclusive. It follows, using the inductive hypothesis, that
$$
N(m,s)=N|A|^{2(m-1)}+|\r|^2 M,
$$
which is independent of $s$.

\end{proof}

\begin{cor}\label{nusy} The number of symplectic pairs in $V$ is
$$
\frac{(|A|^{2m}-|\r|^{2m})|A|^{2m-1}}{|S|^2}.
$$
\end{cor}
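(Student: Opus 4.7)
The plan is to count symplectic pairs $(u,v)$ in $V$ by first choosing $u$ and then counting compatible $v$. Since $(u,u)=0=(v,v)$ and $(u,v)=1$, the Gram matrix of $u,v$ is invertible, so Lemmas \ref{gram} and \ref{basis} imply that both $u$ and $v$ are basis vectors of $V$. In particular, the admissible $u$ are precisely the basis vectors of length $0$, and Proposition \ref{igual} gives $N(m,0)=(|A|^{2m}-|\r|^{2m})/|S|$ of them.

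Next, I claim that for each such $u$ the number of $v \in V$ with $(u,v)=1$ and $(v,v)=0$ equals $|A|^{2m-1}/|S|$. By Theorem \ref{actra}, the unitary group acts transitively on basis vectors of length zero, and both conditions on $v$ are preserved by unitary transformations, so this count is independent of $u$. Fix a symplectic basis $e_1,\dots,e_m,f_1,\dots,f_m$ of $V$ (Proposition \ref{sp}) and carry out the computation for $u=e_1$. Writing $v=\sum_{i=1}^m(e_ia_i+f_ib_i)$, sesquilinearity gives $(u,v)=b_1$ and $(v,v)=\sum_{i=1}^m(a_i^*b_i-b_i^*a_i)$; thus $(u,v)=1$ forces $b_1=1$, and then $(v,v)=0$ becomes
$$
a_1-a_1^*=\sum_{i=2}^m(a_i^*b_i-b_i^*a_i).
$$
For each choice of the $2(m-1)$ free entries $a_2,\dots,a_m,b_2,\dots,b_m \in A$ the right-hand side is a fixed element $s\in S$, and by (\ref{yx}) the equation $a_1-a_1^*=s$ has exactly $|R|$ solutions. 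Since $|R|=|A|/|S|$ by (\ref{ars}), this gives $|A|^{2(m-1)}\cdot |R|=|A|^{2m-1}/|S|$ admissible $v$'s.

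Multiplying the two counts yields the desired formula $\frac{(|A|^{2m}-|\r|^{2m})|A|^{2m-1}}{|S|^2}$. The only substantive step is setting up the coordinate computation by invoking Theorem \ref{actra} (or, equivalently, extending $u$ to a symplectic basis using Lemma \ref{dos} and Lemma \ref{exte}); once the coordinates are in place, the rest is an immediate application of (\ref{yx}).
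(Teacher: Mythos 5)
Your proof is correct and follows essentially the same route as the paper: count the basis vectors of length $0$ via Proposition \ref{igual}, then show that each such $u$ admits exactly $|A|^{2(m-1)}|R|=|A|^{2m-1}/|S|$ admissible partners $v$ by an application of (\ref{yx}). The only cosmetic difference is that the paper parametrizes the partners of an arbitrary $u$ as $ua+v+z$ with $z$ orthogonal to $u,v$ (using Lemma \ref{dos}), whereas you first reduce to $u=e_1$ by the transitivity of Theorem \ref{actra} and work in symplectic coordinates.
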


\begin{proof} By Proposition \ref{igual}, the number of basis vectors of length 0 is
$(|A|^{2m}-|\r|^{2m})/|S|$. Given any such vector, say $u$, Lemma \ref{dos} ensures the existence of a vector $v\in V$ of length 0
such that $(u,v)=1$. Then, a vector $w\in V$ satisfies $(u,w)=1$ if and only if
$w=au+v+z$, where $z$ is orthogonal to $u,v$. Moreover, given any such $z$, we see that $w$ has length 0 if and only if
$$
0=(w,w)=(ua+v,ua+v)+(z,z)=a^*-a+(z,z).
$$
It follows from (\ref{yx}) that the number of solutions $a \in A$ to this equation is $|R|$.
We infer that the number of symplectic pairs is
$$
\frac{(|A|^{2m}-|\r|^{2m})}{|S|}\times |A|^{2m-2}|R|=\frac{(|A|^{2m}-|\r|^{2m})|A|^{2m-1}}{|S|^2}.
$$
\end{proof}

It follows from Lemma \ref{exte} and Proposition \ref{sp} that $U_{2m}(A)$ acts transitively on symplectic pairs. Moreover,
we readily see that the stabilizer of a given symplectic pair is isomorphic to $U_{2(m-1)}(A)$. We infer from Corollary
\ref{nusy} that
$$
|U_{2m}(A)|=\frac{(|A|^{2m}-|\r|^{2m})|A|^{2m-1}}{|S|^2}\times \frac{(|A|^{2(m-1)}-|\r|^{2(m-1)})|A|^{2m-3}}{|S|^2}\times\cdots\times
\frac{(|A|^{2}-|\r|^{2})|A|}{|S|^2}.
$$
We have proven the following result.
\begin{thm}
\label{zxz2} Let $A$ be a finite local ring, not necessarily commutative, with Jacobson radical~$\r$ and residue field $F_q$ of odd characteristic. Suppose $A$ has
an involution $*$ such that $a-a^*\in\r$ for all $a\in A$. Let $S$ be the  group of all $a\in A$ such that
$a=-a^*$. Then
$$
|U_{2m}(A)|=\frac{|\r|^{m(m+1)}|A|^{m^2} (q^{2m}-1)(q^{2(m-1)}-1)\cdots (q^2-1)}{|S|^{2m}}.\qed
$$
\end{thm}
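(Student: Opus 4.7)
The formula is already reduced to a product by the orbit–stabilizer discussion immediately preceding the theorem, so the plan is simply to verify that the displayed product collapses into the claimed closed form.

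First I would reconfirm the orbit–stabilizer set-up. By Lemma~\ref{exte} together with Proposition~\ref{sp}, any symplectic pair $(u,v)$ extends to a symplectic basis whose last $2(m-1)$ vectors span the orthogonal complement of $\langle u,v\rangle$, on which $h$ restricts to a non-degenerate skew-hermitian form by Lemma~\ref{deco}. Mapping one extended symplectic basis to another yields an element of $U_{2m}(A)$ sending $(u,v)$ to any other symplectic pair, so the action is transitive; an element that fixes $(u,v)$ must preserve the orthogonal decomposition $\langle u,v\rangle\perp\langle u,v\rangle^\perp$ and therefore restricts to a unitary transformation of a rank-$2(m-1)$ space, giving the identification of the stabilizer with $U_{2(m-1)}(A)$. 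Combined with Corollary~\ref{nusy}, this yields the recursion
$$
|U_{2m}(A)|=\frac{(|A|^{2m}-|\r|^{2m})|A|^{2m-1}}{|S|^2}\cdot |U_{2(m-1)}(A)|,
$$
and unrolling the recursion (with the base case $|U_0(A)|=1$) gives the product appearing just above the theorem statement.

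Next I would simplify the product. The crucial arithmetic observation is that since $A/\r\cong F_q$, we have $|A|=q|\r|$, so for each $k$
$$
|A|^{2k}-|\r|^{2k}=|\r|^{2k}(q^{2k}-1).
$$
Substituting into the product over $k=1,\dots,m$ yields
$$
|U_{2m}(A)|=\frac{\bigl(\prod_{k=1}^m|\r|^{2k}\bigr)\bigl(\prod_{k=1}^m(q^{2k}-1)\bigr)\bigl(\prod_{k=1}^m|A|^{2k-1}\bigr)}{|S|^{2m}}.
$$

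Finally I would collect the exponents. Using $\sum_{k=1}^m 2k=m(m+1)$, $\sum_{k=1}^m(2k-1)=m^2$, and the obvious $\sum_{k=1}^m 2=2m$, the three products simplify to $|\r|^{m(m+1)}$, $|A|^{m^2}$, and $|S|^{2m}$ respectively, delivering the claimed formula. The argument involves no genuine obstacle; the only point that merits care is verifying that the orthogonal complement of a symplectic pair is again a free module of rank $2(m-1)$ carrying a non-degenerate form, which is exactly what Lemmas~\ref{exte} and~\ref{deco} provide.
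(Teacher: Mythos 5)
Your proposal is correct and follows essentially the same route as the paper: the orbit--stabilizer argument on symplectic pairs via Lemma \ref{exte}, Proposition \ref{sp}, Lemma \ref{deco} and Corollary \ref{nusy} is exactly the discussion preceding Theorem \ref{zxz2}, and your simplification using $|A|=q|\r|$ and the exponent sums $m(m+1)$, $m^2$, $2m$ is the implicit computation that turns the paper's displayed product into the stated closed form.
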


\begin{note}{\rm We readily verify, by means of (\ref{ars}) and (\ref{ars2}), the equivalence of the formulae given in Theorems \ref{zxz} and \ref{zxz2}.}
\end{note}

\section{The order of the stabilizer of a basis vector}\label{s6}

\begin{thm}\label{lasta} Let $v\in V$ be a basis vector and let $S_v$ be the stabilizer of $v$ in $U(V,h)$. Then
$$
|S_v|=|U_{2(m-1)}(A)|\times |A|^{2m-1}/|S|.
$$
In particular, the order of $S_v$ is independent of $v$ and its length.
\end{thm}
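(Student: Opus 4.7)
The plan is to apply the orbit-stabilizer theorem. By Theorem \ref{actra}, $U(V,h)$ acts transitively on the set of basis vectors of $V$ having the same length as $v$, namely $s=(v,v)\in S$. By Proposition \ref{igual}, this orbit has size $N(m,s)=(|A|^{2m}-|\r|^{2m})/|S|$, a quantity independent of $s$. Hence
$$
|S_v|=\frac{|U(V,h)|}{N(m,s)}=\frac{|U_{2m}(A)|\cdot|S|}{|A|^{2m}-|\r|^{2m}},
$$
which makes clear that $|S_v|$ depends on neither $v$ nor its length.

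Next I would substitute the formula from Theorem \ref{zxz2} for $|U_{2m}(A)|$ into the displayed expression. The key identity is $|A|=q|\r|$ (since $A/\r=F_q$), which gives
$$
|A|^{2m}-|\r|^{2m}=|\r|^{2m}(q^{2m}-1).
$$
Substituting and cancelling the factor $q^{2m}-1$ yields
$$
|S_v|=\frac{|\r|^{m(m+1)-2m}\,|A|^{m^2}\,(q^{2(m-1)}-1)\cdots(q^2-1)}{|S|^{2m-1}}=\frac{|\r|^{m(m-1)}\,|A|^{m^2}\,(q^{2(m-1)}-1)\cdots(q^2-1)}{|S|^{2m-1}}.
$$

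Finally I would compare this with $|U_{2(m-1)}(A)|\cdot|A|^{2m-1}/|S|$. Applying Theorem \ref{zxz2} with $m$ replaced by $m-1$, this product equals
$$
\frac{|\r|^{(m-1)m}\,|A|^{(m-1)^2}\,(q^{2(m-1)}-1)\cdots(q^2-1)}{|S|^{2(m-1)}}\cdot\frac{|A|^{2m-1}}{|S|}.
$$
Since $(m-1)^2+(2m-1)=m^2$ and $2(m-1)+1=2m-1$, the two expressions agree exactly, and the theorem follows.

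No step here is a serious obstacle: transitivity on basis vectors of a fixed length is already established, the orbit count is already computed, and the remainder is a direct orbit-stabilizer calculation together with the exponent bookkeeping above. If anything, the only point requiring care is the use of $|A|=q|\r|$ to rewrite $|A|^{2m}-|\r|^{2m}$ so that the factor $(q^{2m}-1)$ cancels cleanly against the corresponding factor in $|U_{2m}(A)|$.
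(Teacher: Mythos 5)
Your proof is correct, and its first half is exactly the paper's: orbit--stabilizer via Theorem \ref{actra} together with the count $N(m,s)=(|A|^{2m}-|\r|^{2m})/|S|$ from Proposition \ref{igual}, giving $|S_v|=|U_{2m}(A)|\,|S|/(|A|^{2m}-|\r|^{2m})$ at once. The only divergence is in the final step: the paper avoids the closed formula entirely by dividing through the recursion
$$
|U_{2m}(A)|=\frac{(|A|^{2m}-|\r|^{2m})\,|A|^{2m-1}}{|S|^{2}}\,|U_{2(m-1)}(A)|,
$$
which is displayed just before Theorem \ref{zxz2} (it comes from transitivity on symplectic pairs), so the factor $(|A|^{2m}-|\r|^{2m})/|S|$ cancels immediately and no exponent bookkeeping is needed. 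Your route instead substitutes the closed formula of Theorem \ref{zxz2} for both $|U_{2m}(A)|$ and $|U_{2(m-1)}(A)|$ and checks the exponents via $|A|=q|\r|$; the arithmetic is right ($m(m+1)-2m=m(m-1)$ and $(m-1)^2+(2m-1)=m^2$), it is just slightly longer than the paper's cancellation.
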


\begin{proof} By Theorem \ref{actra}, the number of basis vectors of length $(v,v)$ is equal to $|U_{2m}(A)|/|S_v|$.
It follows from Proposition \ref{igual} that
\begin{equation}\label{idw}
|U_{2m}(A)|/|S_v|=(|A|^{2m}-|\r|^{2m})/|S|.
\end{equation}
On the other hand, the above discussion shows that
\begin{equation}\label{idw2}
|U_{2m}(A)|=\frac{(|A|^{2m}-|\r|^{2m})|A|^{2m-1}}{|S|^2}\times |U_{2(m-1)}(A)|.
\end{equation}
Combining (\ref{idw}) and (\ref{idw2}) we obtain the desired result.
\end{proof}

\section{The case when $A$ is commutative and principal}\label{s7}

We make the following assumptions on $A$ until further notice:

\medskip

(A7) There is $a\in\r$ such that $Aa=aA=\r$.

\medskip

(A8) The elements of $R$ commute among themselves.

\medskip

Using (A7), we see that $|A|=q^e$, $|\r|=q^{e-1}$. Moreover, from $Aa=aA$, we get $a^*A=Aa^*$. Since $a=(a-a^*)/2+(a+a^*)/2$, not both $a-a^*$ and $a+a^*$ can be in $\r^2$. Thus,
$\r$ has a generator $x$ that is hermitian or skew-hermitian and satisfies $Ax=xA$. In any case, $x^2$ is hermitian. We claim
that
$$
A=R+Rx.
$$
Note first of all that, because of (A8), $R$ is a subring of $A$. Clearly, $R$ is a local ring with maximal ideal $\m=R\cap\r$
and residue field $R/\m\cong A/\r$. Secondly, from $A = R+S$ and $S\subseteq\r = Ax$, we deduce
\begin{equation}
\label{der}
A=R+Ax.
\end{equation}
Repeatedly using (\ref{der}) as well as (A8), we obtain
$$
\begin{aligned}
A &=R+(R+Ax)x=R+Rx+Ax^2=R+Rx+(R+Ax)x^2\\
&=R+Rx+Ax^3=R+Rx+(R+Ax)x^3=R+Rx+Ax^4=\dots=R+Rx.
\end{aligned}
$$
If $*=1_A$ then $A=R$ and $\r=\m$ has $q^{e-1}$ elements. We make the following assumptions on $A$ until further notice:

\medskip

(A9) $*\neq 1_A$.

\medskip

(A10) $R\cap Rx=(0)$.

\medskip

It follows from (A9) and $A=R+Rx$ that $x$ cannot be hermitian. Therefore $x$ is skew-hermitian. Note that $R$ is a principal ring
with maximal ideal $\m=Rx^2$, since
$$
\m=R\cap Ax=R\cap (R+Rx)x=R\cap (Rx+Rx^2)=Rx^2+(R\cap Rx)=Rx^2.
$$

\begin{lemma}\label{evod} The group epimorphism $f:R\to Rx$, given by $f(r)=rx$, is injective if $e$ is even, whereas
the kernel of $f$ is $Rx^{e-1}$ and has $q$ elements if $e$ is odd.
\end{lemma}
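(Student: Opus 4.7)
The plan is to identify $\ker f$ as the intersection $R\cap \mathrm{Ann}_A(x)$ and then pin this down using the decomposition $A=R\oplus Rx$ furnished by (A10). The first step is to show that $\mathrm{Ann}_A(x)=Ax^{e-1}$ (working with left annihilators throughout). Under (A7) together with the fact that $A$ is local, every nonzero $a\in A$ admits a normal form $a=ux^k$ for some unit $u$ and some $0\le k\le e-1$: choose the largest $k$ with $a\in Ax^k$, write $a=bx^k$, and note that $b\in\r$ would force $a\in Ax^{k+1}$, violating maximality. Then $(ux^k)x=ux^{k+1}$ vanishes precisely when $k=e-1$, so $\mathrm{Ann}_A(x)=Ax^{e-1}$. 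The same argument applied to $x^{e-1}$ yields $\mathrm{Ann}_A(x^{e-1})=Ax=\r$.

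The core calculation is then $R\cap Ax^{e-1}$. Given $r=ax^{e-1}\in R$, use (A10) to decompose $a=s+tx$ uniquely with $s,t\in R$; since $x^e=0$, this collapses to $r=sx^{e-1}$, so $\ker f=R\cap Rx^{e-1}$, where $Rx^{e-1}$ is the additive group of $R$-multiples of $x^{e-1}$. Now split on parity. If $e$ is even, then $e-2$ is even, so $x^{e-2}$ is hermitian and hence lies in $R$; commutativity of $R$ from (A8) gives $sx^{e-2}\in R$, whence $sx^{e-1}=(sx^{e-2})x\in Rx$. Combined with $sx^{e-1}\in R$, (A10) forces $sx^{e-1}=0$, so $\ker f=0$ and $f$ is injective. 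If $e$ is odd, then $e-1$ is even and $x^{e-1}$ is hermitian, so $x^{e-1}\in R$ and $Rx^{e-1}\subseteq R$ automatically; since $Rx^{e-1}\cdot x=Rx^e=0$ as well, we conclude $\ker f=Rx^{e-1}$.

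Finally, to count $|Rx^{e-1}|$ in the odd case, view $Rx^{e-1}$ as a cyclic $R$-module with annihilator $\mathrm{Ann}_R(x^{e-1})=R\cap Ax=\m$, so $|Rx^{e-1}|=|R/\m|=q$ via the isomorphism $R/\m\cong A/\r=F_q$ recorded earlier. The main obstacle is the normal-form argument of the first step: one must carefully exploit the identity $Ax=xA$ from (A7) to control the left-ideal structure of the possibly non-commutative $A$ and to ensure that its left ideals form the expected descending chain $A\supset Ax\supset Ax^2\supset\cdots\supset Ax^{e-1}\supset 0$, from which the annihilator computations follow cleanly.
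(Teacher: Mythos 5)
Your proof is correct and follows essentially the same route as the paper: establish the normal form $a=ux^k$ ($u$ a unit, $0\le k<e$), deduce $\mathrm{Ann}_A(x)=Ax^{e-1}=Rx^{e-1}$ so $\ker f=R\cap Rx^{e-1}$, and split on the parity of $e$ using that $x^{e-2}$ (resp.\ $x^{e-1}$) is hermitian when $e$ is even (resp.\ odd), concluding via (A8) and (A10). The only cosmetic difference is in the final count: you compute $|Rx^{e-1}|$ as $|R/\mathrm{Ann}_R(x^{e-1})|=|R/\m|=q$, while the paper observes that $Ax^{e-1}$ is a one-dimensional $F_q$-vector space; these are the same observation in different clothing.
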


\begin{proof} Note that every non-zero element of $A$ is of the form $cx^i$ for some unit $c\in U(A)$ and a unique $0\leq i<e$.
It follows that the annihilator of $x$ in $A$ is equal to $Ax^{e-1}$. From $A=R+Rx$, we infer $Ax^{e-1}=Rx^{e-1}$. Thus,
the kernel of $f$ is $R\cap Rx^{e-1}$. If $e$ is even then $R\cap Rx^{e-1}\subseteq R\cap Rx=(0)$, while if $e$ is odd
$$R\cap Rx^{e-1}=Rx^{e-1}=Ax^{e-1}$$ is a 1-dimensional vector space over $F_q=A/\r$.
\end{proof}

\begin{cor} We have
$$
|A|=|R|^2\text{ if }e\text{ is even and }|A|=\frac{|R|^2}{q}\text{ if }e\text{ is odd.}
$$
Thus, either $e=2\ell$ is even and
$$
|\r|=q^{2\ell-1},\;|\m|=q^{\ell-1}
$$
or $e=2\ell -1$ is odd and
$$
|\r|=q^{2\ell-2},\; |\m|=q^{\ell-1}.
$$
\end{cor}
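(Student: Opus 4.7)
The plan is to obtain the corollary as a direct counting consequence of the decomposition $A = R + Rx$ together with assumption (A10) and Lemma \ref{evod}. Since (A10) asserts $R\cap Rx=(0)$ while $A = R+Rx$ was established just before (A9), we have an internal direct sum of additive groups
\[
A = R\oplus Rx,
\]
so $|A| = |R|\cdot|Rx|$. The size of $Rx$ is controlled by the group epimorphism $f\colon R\to Rx,\ r\mapsto rx$ of Lemma \ref{evod}: $|Rx|=|R|/|\ker f|$, where $|\ker f|=1$ when $e$ is even and $|\ker f|=q$ when $e$ is odd. Substituting yields $|A|=|R|^2$ in the even case and $|A|=|R|^2/q$ in the odd case, which is the first assertion.

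For the numerical identities, I would then combine this with the already-established $|A|=q^e$ and $|\r|=q^{e-1}$. In the even case $e=2\ell$, the equality $|R|^2=q^{2\ell}$ forces $|R|=q^\ell$, and trivially $|\r|=q^{2\ell-1}$. In the odd case $e=2\ell-1$, the equality $|R|^2=q\cdot q^{2\ell-1}=q^{2\ell}$ again gives $|R|=q^\ell$, and $|\r|=q^{2\ell-2}$.

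It remains to compute $|\m|$. Here I would invoke the isomorphism $R/\m\cong A/\r$ from (\ref{ars2}), which shows $|R|/|\m|=|A|/|\r|=q$; hence $|\m|=|R|/q=q^{\ell-1}$ in both parities, matching the stated values.

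There is no real obstacle in this argument; everything is bookkeeping once Lemma \ref{evod} and the decomposition $A=R\oplus Rx$ are in place. The only point requiring a line of care is the appeal to (\ref{ars2}) for $|\m|$, since (A9) and (A10) are active assumptions and one should check that the quotient identification from \S\ref{s4} is compatible with the present setup — which it is, as $R$ and $\m=R\cap\r$ are defined ring-theoretically and the hypothesis $a-a^*\in\r$ still holds.
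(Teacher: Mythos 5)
Your argument is correct and is essentially the paper's own proof: both rest on the direct sum $A=R\oplus Rx$ (from $A=R+Rx$ and (A10)), Lemma \ref{evod} for $|Rx|$, and the isomorphism $R/\m\cong A/\r\cong F_q$ for $|\m|$. You have simply written out the bookkeeping that the paper leaves implicit.
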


\begin{proof} This follows from $A=R\oplus Rx$, Lemma \ref{evod} and the group isomorphism $R/\m\cong F_q$.
\end{proof}

We now resume the general discussion and note that if $A$ is a commutative, principal ideal ring and $*\neq 1_A$ then conditions (A7)-(A10)
are automatically satisfied, for in this case we have $R\cap Rx\subseteq R\cap S=(0)$. It is clear that $A\cong R[t]/(t^2-x^2)$ if $e=2\ell$ is even, and $A\cong R[t]/(t^2-x^2,t^{2\ell-1})$ if $e=2\ell-1$ is odd. Using part of the above information together with Theorem \ref{zxz}, we obtain the following result.

\begin{thm}
\label{ords} Let $A$ be a finite, commutative, principal, local ring with Jacobson radical~$\r$ and residue field $A/\r\cong F_q$ of odd characteristic. Let $e$ be the nilpotency degree of $\r$. Suppose $A$ has
an involution $*$ such that $a-a^*\in\r$ for all $a\in A$.

(a) If $*=1_A$ then
$$
|U_{2m}(A)|=|\Sp_{2m}(A)|=q^{(e-1)(2m^2+m)+m^2}(q^{2m}-1)(q^{2(m-1)}-1)\cdots (q^2-1).
$$
(b) If $*\neq 1_A$ and $e=2\ell$ is even then
$$
|U_{2m}(A)|=q^{(2\ell-1)(2m^2-m)}q^{2(\ell-1)m} q^{m^2}(q^{2m}-1)(q^{2(m-1)}-1)\cdots (q^2-1).
$$
(b) If $*\neq 1_A$ and $e=2\ell-1$ is odd then
$$
|U_{2m}(A)|=q^{(2\ell-2)(2m^2-m)}q^{2(\ell-1)m} q^{m^2}(q^{2m}-1)(q^{2(m-1)}-1)\cdots (q^2-1).\qed
$$
\end{thm}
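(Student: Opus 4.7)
The plan is to apply Theorem \ref{zxz}, which gives
$$
|U_{2m}(A)| = |\r|^{2m^2-m}|\m|^{2m}\, q^{m^2}(q^{2m}-1)(q^{2(m-1)}-1)\cdots (q^2-1),
$$
and then simply to read off $|\r|$ and $|\m|$ in each of the three cases. All of (A1)--(A6) are in force since $A$ is a finite, commutative, principal, local ring with residue field of odd characteristic and $a-a^*\in\r$ by hypothesis. So the whole task reduces to computing the two group orders $|\r|$ and $|\m|$.

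For part (a), the involution is trivial, so every element of $A$ is hermitian; hence $R=A$ and $\m = R\cap\r = \r$. With $|A|=q^e$ one has $|\r|=q^{e-1}$, and substitution into Theorem \ref{zxz} gives the exponent $(e-1)(2m^2-m)+(e-1)(2m)=(e-1)(2m^2+m)$ together with the usual symplectic factor. Moreover, when $*=1_A$ the skew-hermitian identity $h(v,u)=-h(u,v)^*$ reduces to the alternating condition $h(v,u)=-h(u,v)$, so $U(V,h)$ is by definition the symplectic group $\Sp_{2m}(A)$, which gives the first equality of (a).

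For parts (b) and (c), I would invoke the paragraph immediately preceding the statement of the theorem: in the commutative principal case with $*\neq 1_A$, conditions (A7)--(A10) hold automatically because $R\cap Rx\subseteq R\cap S=0$. Therefore the corollary just before the theorem applies and yields $|\r|=q^{2\ell-1}$, $|\m|=q^{\ell-1}$ when $e=2\ell$, and $|\r|=q^{2\ell-2}$, $|\m|=q^{\ell-1}$ when $e=2\ell-1$. Substituting these into the formula of Theorem \ref{zxz} gives precisely the expressions stated in (b) and (c).

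There is essentially no obstacle here: the real work has already been carried out earlier in the paper. The one subtle point to keep in mind is that the parity of $e$ affects $|\r|$ but not $|\m|$, which is explained by Lemma \ref{evod}: the map $f\colon R\to Rx$ is injective when $e$ is even but acquires a kernel of size $q$ when $e$ is odd, so the odd case loses one factor of $q$ from $|\r|=|R|\cdot |Rx|$ but preserves $|\m|$. Once this is tracked correctly, the three formulas are just exponent arithmetic.
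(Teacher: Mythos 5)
Your proposal is correct and is essentially the paper's own proof: the paper likewise just substitutes the values $|\r|$ and $|\m|$ obtained in \S\ref{s7} (via (A7)--(A10) and the corollary to Lemma \ref{evod}, plus $R=A$, $\m=\r$ when $*=1_A$) into Theorem \ref{zxz}. Only a trivial slip in your closing aside: the relevant identity is $|A|=|R|\cdot|Rx|$ (with $|\r|=q^{e-1}$ coming directly from principality), not $|\r|=|R|\cdot|Rx|$; this does not affect the argument.
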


\medskip

\begin{note}{\rm Our initial conditions on $A$ do not force $R$ to be a subring of $A$ or $R\cap Rx=(0)$. Indeed, let $A$ be as indicated in the parenthetical remark of the second case of Example \ref{tresdos}, and set $x=a$, $r=b$. Then $rx\in R\cap Rx$, so $R\cap Rx\neq (0)$, and $rxr=-r^2x$
with $(-r^2 x)^*=r^2 x$, so $R$ is not a subring of $A$. It is also clear that $A$ need not be principal, even if so is $R$, as can be seen
by taking $b=0$ and $B$ not a field in the general construction of Example \ref{tresdos} (e.g. $A=Z_{p^2}[t]/(t^2)$).}
\end{note}

%In the non-commutative case, by assumption $xr=r^\si x$ for some function $\si$, which I want to show is an automorphism. In that
%case $(r+sx)^*=r-xs=r-s^\si x$, so $A=R[t;\si]/(t^2-x^2)$ or $A=R[t;\si]/(t^2-x^2,t^{2\ell-1})$ depending on whether $e=2\ell$ is even or
%$e=2\ell-1$ is odd, so we go back to our initial examples.

\noindent{\bf Acknowledgement.} We are very grateful to the referee for a thorough reading of the paper and valuable suggestions.

%==================================================


\begin{thebibliography}{RBMW}
%==================================================



\bibitem[CHQS]{CHQS} J. Cruickshank, A. Herman, R. Quinlan, F. Szechtman, \emph{Unitary groups over local rings},  J. Algebra Appl. 13 (2014) 1350093.

\bibitem[D]{D} J. Dieudonn\'e, \emph{La G\'eom\'etrie des Groupes
Classiques}, Springer-Verlag, Berlin, 1955.

\bibitem[FH]{FH} H. Feng, \emph{Orders of classical groups over finite rings}, J. Math.
Res. Exposition 18 (1998) 507--512.

\bibitem[H]{H} I.N. Herstein, \emph{Noncommutative rings}, The Mathematical
Association of America, 1968.


\bibitem[HO]{HO} A. Hahn and O.T. O'Meara, \emph{The classical groups and $K$-theory}, Spinger-Verlag, Berlin, 1989.

%\bibitem[HS]{HS} A. Herman and F. Szechtman,
%\emph{The Weil representation of a unitary group associated to a ramified quadratic extension of a finite local ring},
%J. Algebra 392 (2013) 158-–184.



\bibitem[T]{T} D.E. Taylor, \emph{The geometry of the classical groups}, Heldermann Verlag, Berlin, 1992.

\end{thebibliography}
\end{document}